\numberwithin{equation}{section}
\newcommand\DOm{\pt\Om}
\newcommand\FhI{\mathcal{F}_h^{\,\rm I}}
\newcommand\HOcurlO{\tH_0(\curl^0,\Om)}
\newcommand\HOcurl{\tH_0(\curl,\Om)}
\newcommand\HOdivO{\tH_0(\dv^0,\Om)}
\newcommand\HOdiv{\tH_0(\dv,\Om)}
\newcommand\HOlTF{\mathrm{H}^1_0(\omega_F)}
\newcommand\HOl{\mathrm{H}^1_0(\Om)}
\newcommand\HcurlO{\tH(\curl^0,\Om)}
\newcommand\Hcurl{\tH(\curl,\Om)}
\newcommand\HdivO{\tH(\dv^0,\Om)}
\newcommand\Hdiv{\tH(\dv,\Om)}
\newcommand\Htcurl{\tH^t(\curl,\Om)}
\newcommand\IC{\mathcal{I}_{\rm C}}
\newcommand\IN{\boldsymbol{\mathcal{I}}_{\rm N}}
\newcommand\IR{\boldsymbol{\mathcal{I}}_{\rm R}}
\newcommand\Jt{J^*}
\newcommand\LO{\mathrm{L}^2(\Om)}
\newcommand\LhO{\mathcal{L}_h^0(\Om)}
\newcommand\NhO{\boldsymbol{\mathcal{N}}_h^{\,0}(\Om)}
\newcommand\N{\mathbb{N}}
\newcommand\Om{\Omega}
\newcommand\Pol{\mathbb{P}}
\newcommand\RTO{\boldsymbol{\mathcal{RT}}_h^{\,0}(\Om)}
\newcommand\R{\mathbb{R}}
\newcommand\Th{\mathcal{T}_h}
\newcommand\curl{\operatorname{\mathbf{curl}}}
\newcommand\dime{\operatorname{\mathrm{dim}}}
\newcommand\dv{\operatorname{\mathrm{div}}}
\newcommand\jump[1]{\left[\hspace{-1.5pt}\left[#1\right]\hspace{-1.5pt}\right]}
\newcommand\pt{\partial}
\newcommand\tMh{\tM_h}
\newcommand\tM{\boldsymbol{\mathcal{M}}}
\newcommand\tHt{\tH^t(\Om)}
\newcommand\tH{\mathbf{H}}
\newcommand\tI{\boldsymbol{I}}
\newcommand\tK{\boldsymbol{\mathcal{K}}}
\newcommand\tLtwo{\tL^2(\Om)}
\newcommand\tL{\mathbf{L}}
\newcommand\tO{\boldsymbol{0}}
\newcommand\tPol{\tilde{\mathbb{P}}}
\newcommand\tP{\boldsymbol{P}}
\newcommand\tSh{\tS_h}
\newcommand\tS{\boldsymbol{S}}
\newcommand\tTh{\tT_h}
\newcommand\tT{\boldsymbol{T}}
\newcommand\tbeta{\boldsymbol{\beta}}
\newcommand\tdelta{\boldsymbol{\delta}}
\newcommand\teh{\boldsymbol{e}_h}
\newcommand\tg{\boldsymbol{g}}
\newcommand\tn{\boldsymbol{n}}
\newcommand\tq{\boldsymbol{q}}
\newcommand\trh{\boldsymbol{r}_h}
\newcommand\tsghh{\boldsymbol{\hat{\sigma}}_h}
\newcommand\tsght{\tsgt_h}
\newcommand\tsgh{\tsg_h}
\newcommand\tsgt{\boldsymbol {\tilde{\sigma}}}
\newcommand\tsg{\boldsymbol{\sigma}}
\newcommand\ttauh{\ttau_h}
\newcommand\ttau{\boldsymbol{\tau}}
\newcommand\tuhh{\boldsymbol{\hat{u}}_h}
\newcommand\tuht{\tut_h}
\newcommand\tuh{\tu_h}
\newcommand\tut{\boldsymbol{\tilde{u}}}
\newcommand\tu{\boldsymbol{u}}
\newcommand\tvh{\tv_h}
\newcommand\tv{\boldsymbol{v}}
\newcommand\tx{\boldsymbol{x}}
\begin{document}

\title{Residual-based a posteriori error estimation for the Maxwell's
eigenvalue problem}
\shorttitle{A posteriori error estimation for the Maxwell's eigenvalue
problem}

\author{%
{\sc Daniele Boffi} \\[2pt] 
Dipartimento di Matematica ``F. Casorati'', 
Universit\`a degli studi di Pavia, Pavia 27100, Italy \\[2pt]
{\rm daniele.boffi@unipv.it} \\[6pt]
{\sc Lucia Gastaldi} \\[2pt]
DICATAM, Universit\`a  degli Studi di Brescia, Brescia BS 25123, Italy
\\[2pt]
{\rm lucia.gastaldi@unibs.it} \\[6pt]
{\sc \sc Rodolfo Rodr\'iguez and Ivana \v{S}ebestov\'a} \\[2pt]
CI$^2$MA, Departamento de Ingenier\'ia Matem\'atica,
Universidad de Concepci\'on, Casilla 160-C, Concepci\'on, Chile \\[2pt]
{\rm rodolfo@ing-mat.udec.cl \quad isebestova@udec.cl}
}

\shortauthorlist{D. BOFFI, \textit{ET AL.}}

\maketitle

\begin{abstract}
{We present an a posteriori estimator of the error in the
$\mathrm{L}^2$-norm for the numerical approximation of the Maxwell's
eigenvalue problem by means of N\'ed\'elec finite elements. Our analysis
is based on a Helmholtz decomposition of the error and on a
superconvergence result between the $\mathrm{L}^2$-orthogonal projection
of the exact eigenfunction onto the curl of the N\'ed\'elec finite
element space and the eigenfunction approximation. Reliability of the a
posteriori error estimator is proved up to higher order terms and local
efficiency of the error indicators is shown by using a standard bubble
functions technique. The behavior of the a posteriori error estimator is
illustrated on a numerical test.}
{A posteriori error estimate, Maxwell's eigenvalue problem,
N\'ed\'elec finite elements, mixed formulation}
\end{abstract}

%
\section{Introduction}
\label{INTRO}
\setcounter{equation}{0}

One of the most classical problems in electromagnetism is the so called
{\em cavity problem for Maxwell's equations}, which corresponds to
computing the resonant frequencies of a bounded perfectly conducting
cavity. This amounts to solving the eigenvalue problem for the Maxwell's
system. Although there has been an intense research in this area, to the
best of authors' knowledge, no results on a posteriori error estimation
of Maxwell's eigenvalue problem are available in the literature. 

A posteriori error estimation for various problems involving Maxwell's
equations has been subject of several papers. Residual-based a
posteriori error analyses have been done for an electromagnetic
scattering problem in \cite{Monk_AEIs_Maxwell_98} and for an eddy
current problem in \cite{Beck_Hipt_Hop_Ron_Wohl_res_AEES_ECC_00}; in
both cases, smooth coefficients and sufficiently regular domains have
been considered. Generalizations to piecewise constant coefficients and
to Lipschitz domains have been done in
\cite{Nica_Creus_AEE_hetero_Maxwell_iso_aniso_meshes_03} and
\cite{Schoberl_AEEs_Maxwell_08}, respectively. Estimates robust with
respect to the coefficients of the equations have been obtained in
\cite{Coch_Nica_rob_AEE_Maxwell_07}. The $hp$-version has been
considered in
\cite{Burg_hp_effi_res_AEE_Maxwell_11,Burg_res_AEE_hpFEM_Maxwell_12},
where bounds with explicit dependence on the polynomial degree have been
derived. Further, convergence of an $hp$-adaptive strategy based on
these estimates has been studied in \cite{Burg_conv_hpAFE_Maxwell_13}.
Residual-based a posteriori error estimates have been also obtained for
the $\boldsymbol{A}-\phi$ and the $\boldsymbol{T}/\Om$ magnetodynamic
harmonic formulations in
\cite{Creu_Nica_Tang_Le_Nemi_Pir_res_AEs_Aphi_Maxwell_12} and
\cite{Creu_Nica_Tang_Le_Nemi_Pir_res_AEEs_TOmega_Maxwell_sys_13},
respectively, and for the time-harmonic Maxwell's equations with strong
singularities in
\cite{Chen_Wnag_Zheng_adap_multilvl_met_timeharmo_Maxwell_07}.
Functional-type error estimates for the time-harmonic Maxwell's
equations have been derived in \cite{Repin_func_AEs_Maxwell_07} and
\cite{Hannu_func_AEEs_Maxwell_08}. A drawback of this approach is that
it requires to solve a global auxiliary problem. By contrast,
equilibrated fluxes-based a posteriori error estimates requiring to
solve only local problems have been analyzed in \cite{BraSch:2008}.
Furthermore, implicit error estimates have been derived in
\cite{Haruty_Izsak_Vegt_adap_FE_tech_Maxwell_impli_AEEs_08} and a
Zienkiewicz--Zhu error estimator based on a patch recovery has been
introduced in \cite{Nica_on_ZienkiZu_EEs_Maxwell_05}.

On the other hand, a posteriori error estimation for different spectral
problems has been subject of several papers, too. Among the first ones,
we mention \cite{Verfurth,Larson,DPR} for the standard finite element
approximation of second-order elliptic eigenvalue problems. We also
mention \cite{Morin} and
\cite{Giani_Graham_conv_adap_met_ellip_eig_prob_09}, where adaptive
schemes based on this estimators have been proved to converge. 

In its turn, the first paper dealing with a posteriori error estimates
for a mixed formulation of an eigenvalue problem seems to be
\cite{Dur_Gast_Padra_AEEs_MA_eig_prob_99}, where Raviart--Thomas finite
elements are used for the discretization of the spectral problem for the
Laplace operator. The analysis in this reference makes use of a
Helmholtz decomposition of the error, as is typical in the a posteriori
error analysis of mixed problems. It also uses a superconvergent
approximation of the primal variable, which is constructed by exploiting
the equivalency between the lowest-order Raviart--Thomas mixed
discretization and a non-conforming method for the primal problem based
on the Crouzeix--Raviart space enriched by bubble functions. This
approach has been extended to fluid-structure vibration problems in
\cite{ADPR-1,ADPR-2}. However, in spite of many existing analogies, a
direct extension of these ideas to Maxwell's eigenvalue problem does not
seem feasible, because no element that could play the role of
Crouzeix--Raviart's in \cite{Dur_Gast_Padra_AEEs_MA_eig_prob_99} is
known.

An alternative analysis which avoids the relation between
Raviart--Thomas and Crouzeix--Raviart elements has been more recently
explored in \cite[Section~6.4.2]{BGG}. The results from this reference
are based on a superconvergence result from \cite{Gardini}. A similar
result is obtained in \cite{Lin_Xie_superconv_MFEAs_eig_prob_12} for
more general second-order elliptic eigenvalue problems and mixed finite
element methods. In both cases, an interpolation coming from the
commuting diagram applied to the primal variable comes to play a role in
order to prove superconvergence with respect to the eigenfunction
approximation. An a posteriori error estimator based on this result has
been proposed in \cite{JIA_CHEN_XIE}. More recently, a similar analysis
has been used to conclude convergence of an adaptive scheme in
\cite{BGGG}. 

We derive in this paper a posteriori error estimates of the error in the
$\mathrm{L}^2$-norm for the Maxwell's eigenvalue problem discretized by
N\'ed\'elec elements (see
\cite{Boffi_Ferna_Gast_Peru_Comput_mod_elec_reso_anal_EEA_99,Boffi_Fortin_oper_disc_comp_EEs_00,Caorsi_Fernandes_Rafetto,Demkowicz_Monk}
for the a priori analysis). With this end, we adapt the results from
\cite{Dur_Gast_Padra_AEEs_MA_eig_prob_99,BGG,Lin_Xie_superconv_MFEAs_eig_prob_12,JIA_CHEN_XIE}.
However, our approach use neither an alternative discretization (as in
\cite{Dur_Gast_Padra_AEEs_MA_eig_prob_99}) nor an interpolation coming
from the commuting diagram (as in the other references) for obtaining a
superconvergence approximation of the primal variable. Instead, we use a
superconvergence result between the $\mathrm{L}^2$-orthogonal projection
of the eigenfunction onto the curl of the N\'ed\'elec finite element
space and the eigenfunction approximation.

The structure of the paper is the following. We introduce primal and
mixed weak formulations of the Maxwell's eigenvalue problem and their
corresponding finite element discretizations in
Section~\ref{sec_conti_disc_prob}. The superconvergence result is
established in Section~\ref{sec_superconv}. A posteriori error estimates
in the $\mathrm{L}^2$-norm are derived and reliability and local
efficiency of the error indicators are proved in Section~\ref{sec_AEE}.
The paper is concluded with Section~\ref{sec_numer_exam}, where the
behavior of the derived estimates is illustrated on a numerical test.

\section{Continuous and discrete problems} 
\label{sec_conti_disc_prob}
\setcounter{equation}{0}

In this section we introduce  continuous and discrete variational
formulations of the problem under interest.

\subsection{Preliminaries}

Let $\Om\subset\R^3$ be a domain with polyhedral Lipschitz boundary
$\DOm$. For the sake of simplicity, we assume that $\Om$ is non-convex
and  simply-connected and that its boundary is connected. Let $\tn$ be
the unit outward normal to $\DOm$.

We use standard notation for Lebesgue and Sobolev spaces. Specifically,
for a given domain $M\subset\R^3$, $\mathrm{L}^{2}(M)$ denotes the space
of square-integrable functions and, for $t\in\N$, $\mathrm{H}^t(M)$
denotes the space of functions having square-integrable weak derivatives
up to the $t$-th order. For $t\notin\N$ ($t>0$), $\mathrm{H}^t(M)$
denotes the standard fractional Sobolev space. For any $t>0$, 
$\left\|\cdot\right\|_{t,M}$ denotes the norm of the Sobolev space
$\mathrm{H}^t(M)$. We recall that for any $t>0$ the inclusion 
$\mathrm{H}^t(M)\hookrightarrow \mathrm{L}^2(M)$ is compact. We also
denote $\tL^2(M):=[\mathrm{L}^{2}(M)]^3$ and
$\tH^t(M):=[\mathrm{H}^t(M)]^3$. Further, $\left(\cdot,\cdot\right)_M$
denotes the inner product in $\mathrm{L}^2(M)$ or $\tL^2(M)$ and
$\left\|\cdot\right\|_{0,M}$ the induced norm. Analogously,
$\left(\cdot,\cdot\right)_{\pt M}$ denotes the $(d-1)$-dimensional
$\mathrm{L}^2(\pt M)$ inner product; the same notation is applied in the
vector case. We will omit subscript $M$ in case $M=\Om$. 

We denote by $C$ a generic positive constant, not necessarily the same
at each occurrence, but always independent of the mesh refinement
parameter $h$ which will be introduced in the next subsection.

We recall the definition of some classical spaces that will be used in
the sequel:
\begin{align*}
\HOl & := \left\{v\in \mathrm{H}^1(\Om):
\ v=0\ \ \text{on }\DOm\right\};
\\
\Hdiv & :=\left\{\tv\in\tLtwo:
\ \dv\tv\in\LO\right\};
\\
\HOdiv & :=\left\{\tv\in\Hdiv:
\ \tv\cdot\tn=0\ \ \text{on }\DOm\right\};
\\
\HdivO & :=\left\{\tv\in\Hdiv:
\ \dv\tv=0\ \ \text{in }\Om\right\};
\\
\HOdivO & :=\HOdiv\cap\HdivO;
\\
\Hcurl & :=\left\{\tv\in\tLtwo:
\ \curl\tv\in\tLtwo\right\};
\\
\HOcurl & :=\left\{\tv\in\Hcurl:
\ \tv\times\tn=\tO\ \ \text{on }\DOm\right\};
\\
\HcurlO & :=\left\{\tv\in\Hcurl:
\ \curl\tv=\tO\ \ \text{in }\Om\right\};
\\
\HOcurlO & :=\HOcurl\cap\HcurlO;
\\
\Htcurl & :=\left\{\tv\in\tHt:
\ \curl\tv\in\tHt\right\}\quad(t>0).
\end{align*}
Spaces $\Hdiv$ and $\Hcurl$ endowed with the norms defined by
$$
\left\|\tv\right\|_{\dv}^2
:=\left\|\tv\right\|_0^2
+\left\|\dv\tv\right\|_0^2
\qquad\text{and}\qquad
\left\|\tv\right\|_{\curl}^2
:=\left\|\tv\right\|_0^2
+\left\|\curl\tv\right\|_0^2,
$$
respectively, are Hilbert spaces. In turn, $\HOdiv$, $\HdivO$ and
$\HOdivO$ are closed subspaces of $\Hdiv$. In its turn, $\HOcurl$,
$\HcurlO$ and $\HOcurlO$ are closed subspaces of $\Hcurl$. 

We also denote
\begin{equation*}
\tM:=\curl(\HOcurl)
\end{equation*}
endowed with the $\tLtwo$-norm. Notice that $\tM=\HOdivO$ (see
\cite{Amrouche_Berna_Dauge_Girault_Vect_poten_3D_nonsmooth_dom_98}),
which is a Hilbert space.

In the paper we will repeatedly use the following embedding theorem.

\begin{theorem}
\label{embed_HOcurl_Hdiv in_tHt}
There exists $t\in\left(\frac12,1\right)$ such that the following
inclusions are continuous:
\begin{equation*}
\left.\begin{array}{rc}
\HOcurl\cap\Hdiv 
\\[0.1cm]
\Hcurl\cap\HOdiv
\end{array}\right\}\hookrightarrow\tHt.
\end{equation*}
Moreover, there exists $C>0$ such that
\begin{equation*}
\left\|\tv\right\|_t
\leq C\left(\left\|\curl\tv\right\|_0
+\left\|\dv\tv\right\|_0\right)
\end{equation*}
for all $\tv\in\HOcurl\cap\Hdiv$ or $\tv\in\Hcurl\cap\HOdiv$.
\end{theorem}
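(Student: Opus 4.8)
The two inclusions form a polyhedral refinement of a classical regularity estimate for vector fields with tangential or normal boundary data (Costabel), and I would establish them by a Helmholtz splitting combined with scalar elliptic regularity; the stated norm estimate then follows from a generalized Poincar\'e--Friedrichs inequality. Consider first $\tv\in\HOcurl\cap\Hdiv$. Let $p\in\HOl$ solve $\Delta p=\dv\tv$ in $\Om$. Since $p$ vanishes on $\DOm$ so does its tangential gradient, whence $\nabla p\in\HOcurl$, and $\tv_0:=\tv-\nabla p\in\HOcurl\cap\HdivO$ satisfies $\curl\tv_0=\curl\tv$. As $\curl\tv\in\tLtwo$ is solenoidal, the vector-potential results for non-smooth domains in \cite{Amrouche_Berna_Dauge_Girault_Vect_poten_3D_nonsmooth_dom_98} furnish $\tbeta\in\tH^1(\Om)$ with $\curl\tbeta=\curl\tv$ and $\left\|\tbeta\right\|_1\leq C\left\|\curl\tv\right\|_0$; then $\tv_0-\tbeta$ is curl-free, so, $\Om$ being simply connected, $\tv_0-\tbeta=\nabla q$ for some $q\in\mathrm{H}^1(\Om)$. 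Hence $q$ solves the Dirichlet problem $\Delta q=-\dv\tbeta\in\LO$ in $\Om$ with $\nabla q\times\tn=-\tbeta\times\tn\in\tH^{1/2}(\DOm)$ on $\DOm$, so that the Dirichlet datum $q|_{\DOm}$ lies in $\mathrm{H}^{3/2}(\DOm)$.

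Elliptic regularity on a polyhedron then yields $p,q\in\mathrm{H}^{1+s}(\Om)$ for a fixed $s\in\left(\frac12,1\right)$ depending only on the edge and vertex openings of $\Om$ --- the bound $s<1$ because $\Om$ is non-convex, the bound $s>\frac12$ because it is Lipschitz --- and therefore $\tv=\nabla p+\tbeta+\nabla q\in\tH^{\min\{1,s\}}(\Om)=\tH^s(\Om)$. The case $\tv\in\Hcurl\cap\HOdiv$ is entirely symmetric: one subtracts the gradient of the solution of the \emph{Neumann} problem $\Delta p=\dv\tv$ in $\Om$, $\nabla p\cdot\tn=0$ on $\DOm$ (whose compatibility condition $\int_\Om\dv\tv=\left(\tv\cdot\tn,1\right)_{\DOm}=0$ holds because $\tv\cdot\tn=0$ on $\DOm$), and proceeds as above, solving this time a Neumann problem for $q$ with datum $-\tbeta\cdot\tn\in\tH^{1/2}(\DOm)$. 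Taking $t$ equal to the smallest of the finitely many exponents thus produced --- a quantity depending on $\Om$ alone --- establishes the continuity of both inclusions. (Alternatively, one may simply invoke the corresponding statements of \cite{Amrouche_Berna_Dauge_Girault_Vect_poten_3D_nonsmooth_dom_98}.)

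To obtain the estimate with only $\curl\tv$ and $\dv\tv$ on the right-hand side it suffices to observe that each step of the construction is homogeneous in these two data, so that $\left\|p\right\|_{1+s}\leq C\left\|\dv\tv\right\|_0$, $\left\|\tbeta\right\|_1\leq C\left\|\curl\tv\right\|_0$ and $\left\|q\right\|_{1+s}\leq C\left\|\tbeta\right\|_1\leq C\left\|\curl\tv\right\|_0$. Equivalently, the $\tLtwo$-term that the closed-graph theorem would otherwise leave on the right-hand side is absorbed by the generalized Poincar\'e--Friedrichs inequality $\left\|\tv\right\|_0\leq C\left(\left\|\curl\tv\right\|_0+\left\|\dv\tv\right\|_0\right)$, which holds on $\HOcurl\cap\Hdiv$ and on $\Hcurl\cap\HOdiv$ precisely because $\Om$ is simply connected (so the tangential harmonic fields vanish) and $\DOm$ is connected (so the normal ones do).

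I expect the only genuinely delicate ingredient to be the regularity exponent of the scalar auxiliary problems: on a general Lipschitz domain one can guarantee no more than $\nabla p,\nabla q\in\tH^{1/2}(\Om)$, and the strict improvement to some $s>\frac12$ is exactly where the polyhedral structure of $\Om$ enters, through the explicit description of the corner and edge singularities of the Laplacian. The Helmholtz splittings, the vector potentials of \cite{Amrouche_Berna_Dauge_Girault_Vect_poten_3D_nonsmooth_dom_98}, and the Poincar\'e--Friedrichs inequality are routine once the topology of $\Om$ is fixed.
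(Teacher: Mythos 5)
Your argument is correct in outline, but it takes a different (and much longer) route than the paper, which simply invokes the results of Amrouche, Bernardi, Dauge and Girault: Proposition~3.7 there gives both continuous inclusions (with $t<1$ because $\Om$ is non-convex), and Corollaries~3.16 and~3.19, combined with the simple connectedness of $\Om$ and the connectedness of $\DOm$ respectively, give the estimate without an $\tLtwo$-term --- exactly the ``alternative'' you mention in passing. What you do instead is reconstruct, in sketch form, the proof behind those cited results: a Helmholtz splitting, an $\tH^1(\Om)$ vector potential, and the $\mathrm{H}^{3/2+\epsilon}$ shift theorem for the Dirichlet/Neumann Laplacian on a polyhedron. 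That reconstruction is essentially sound, and your closing remark correctly identifies the polyhedral scalar regularity as the real content; still, a few glossed points deserve flagging if this is meant as a proof rather than a pointer: on a polyhedron $\tn$ jumps across edges, so $\tbeta\times\tn$ (or $\tbeta\cdot\tn$) lies in $\mathrm{H}^{1/2}$ only face by face, and recovering $q|_{\DOm}$ in $\mathrm{H}^{3/2}$ (up to a constant) from its tangential gradient requires the connectedness of $\DOm$ and edge compatibility conditions --- precisely where the cited propositions do nontrivial work; moreover, your construction for the case $\tv\in\HOcurl\cap\Hdiv$ uses simple connectedness to write $\tv_0-\tbeta=\nabla q$, which is fine under the paper's standing assumptions but is not needed for the inclusion itself in the reference. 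The net comparison: the paper's proof is a two-line citation, fully rigorous and sufficient for its purposes; yours buys a self-contained picture of where the exponent $t\in\left(\tfrac12,1\right)$ comes from, at the price of re-proving (and partially hand-waving) the boundary-trace and shift-theorem steps that the reference treats carefully.
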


\begin{proof}
The inclusions in $\tHt$ with $t>1/2$ can be found in
\cite[Proposition~3.7]{Amrouche_Berna_Dauge_Girault_Vect_poten_3D_nonsmooth_dom_98},
for instance; the constraint $t<1$ comes from the fact that $\Om$ is not
convex. The estimate follows from
\cite[Corollary~3.16]{Amrouche_Berna_Dauge_Girault_Vect_poten_3D_nonsmooth_dom_98}
and the simple connectedness of $\Om$ for $\tv\in\Hcurl\cap\HOdiv$ and
from
\cite[Corollary~3.19]{Amrouche_Berna_Dauge_Girault_Vect_poten_3D_nonsmooth_dom_98}
and the fact that $\DOm$ is connected for $\tv\in\HOcurl\cap\Hdiv$.
\end{proof}

\subsection{Continuous problem}

In a homogeneous and isotropic medium, by setting all the physical
constants to 1, the \textit{Maxwell's eigenvalue problem} reduces to
finding $\lambda\in\R$ and $\tu:\,\Om\longrightarrow\R^3$, $\tu\ne\tO$,
satisfying
\begin{align*}
\curl(\curl\tu) & =\lambda\tu\quad\text{in }\Om,
\\
\dv\tu & = 0\quad\text{in }\Om,
\\
\tu\times\tn & =\tO\quad\text{on }\DOm.
\end{align*}

We will consider two formulations of this problem, one primal and the
other mixed. In order to make the numerical approximation easier, the
former usually drops the divergence free constraint. Then, the primal
formulation reads as follows.

\begin{problem}
\label{Prim_Formul}
Find $\left(\lambda,\tu\right)\in\R\times\HOcurl$, $\tu\neq\tO$, such
that
\begin{equation*}
\left(\curl\tu,\curl\tv\right)
=\lambda\left(\tu,\tv\right)
\qquad\forall\tv\in\HOcurl.
\end{equation*}
\end{problem}

The eigenvalues of this problem consist of $\lambda=0$ with eigenspace
$\HOcurlO=\nabla(\HOl)$ and a sequence of positive real numbers
$\left\{\lambda_n\right\}_{n=1}^{\infty}$ which satisfy
$\lambda_n\to\infty$.

For $\lambda\neq 0$, by introducing
$\tsg:=\left(\curl\tu\right)/\lambda\in\tM$, we are led to the following
mixed formulation.

\begin{problem}
\label{Mixed_Formul}
Find $\left(\lambda,\tu,\tsg\right)\in\R\times\HOcurl\times\tM$,
$\left(\tu,\tsg\right)\neq\tO$, such that
\begin{subequations}
\label{mixed_formul}
\begin{alignat}{3}
& \left(\tu,\tv\right)
-\left(\curl\tv,\tsg\right)=0
&& \qquad\forall\tv\in\HOcurl,
\label{mixed_formul_1}
\\
& -\left(\curl\tu,\ttau\right)
=-\lambda\left(\tsg,\ttau\right)
&& \qquad\forall\ttau\in\tM.
\label{mixed_formul_2}
\end{alignat}
\end{subequations}
\end{problem}

The spectra of Problems~\ref{Prim_Formul} and \ref{Mixed_Formul} are
identical, except for $\lambda=0$ which is not an eigenvalue of the
latter. More precisely, both problems are equivalent for $\lambda\neq 0$
in the following sense:
\begin{itemize}
\item if $\left(\lambda,\tu\right)$ is an eigenpair of
Problem~\ref{Prim_Formul} with $\lambda\ne0$, then
$\left(\lambda,\tu,\frac{1}{\lambda}\curl\tu\right)$ is a solution of
Problem~\ref{Mixed_Formul};
\item if $\left(\lambda,\tu,\tsg\right)$ is a solution of
Problem~\ref{Mixed_Formul}, then $\left(\lambda,\tu\right)$ is a
solution of Problem~\ref{Prim_Formul} and
$\tsg=\frac{1}{\lambda}\left(\curl\tu\right)$.
\end{itemize}

For the purpose of subsequent analysis, we define the solution operators
\begin{equation*}
\tT:\;\tM\longrightarrow\tM
\qquad\text{and}\qquad
\tS:\;\tM\longrightarrow\HOcurl
\end{equation*}
as follows: given $\tg\in\tM$, $(\tS\tg,\tT\tg)\in\HOcurl\times\tM$ is
the solution of
\begin{subequations}
\label{def_T_S_eq}
\begin{alignat}{3}
& \left(\tS\tg,\tv\right)
-\left(\curl\tv,\tT\tg\right)=0 
&& \qquad\forall\tv\in\HOcurl,
\label{def_T_S_1}
\\
& -\left(\curl(\tS\tg),\ttau\right)
=-\left(\tg,\ttau\right) 
&& \qquad \forall\ttau\in\tM.
\label{def_T_S_2}
\end{alignat}
\end{subequations}

\begin{lemma}
\label{well_posed_T_S}
Equations~\eqref{def_T_S_eq} yield a well-posed problem.
\end{lemma}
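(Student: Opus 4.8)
The plan is to recognize \eqref{def_T_S_eq} as a mixed variational problem of saddle-point type and apply the standard Babu\v{s}ka--Brezzi theory. I write the problem in the abstract form: find $(\tS\tg,\tT\tg)\in\HOcurl\times\tM$ such that $a(\tS\tg,\tv)+b(\tv,\tT\tg)=0$ for all $\tv\in\HOcurl$ and $b(\tS\tg,\ttau)=(\tg,\ttau)$ for all $\ttau\in\tM$, where $a(\tu,\tv):=(\tu,\tv)$ is the $\tLtwo$-inner product and $b(\tv,\ttau):=-(\curl\tv,\ttau)$. Both forms are clearly bounded; $a$ is symmetric and positive. It remains to verify the two structural conditions of Brezzi's theorem, namely the inf-sup condition on $b$ and the coercivity of $a$ on the kernel of $b$.

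First I would check the inf-sup condition for $b$. Given $\ttau\in\tM=\curl(\HOcurl)$, by definition there exists $\tv\in\HOcurl$ with $\curl\tv=\ttau$; moreover $\tv$ can be chosen in the $\tLtwo$-orthogonal complement of $\HOcurlO=\ker(\curl|_{\HOcurl})$, so that $\tv\in\HOcurl\cap\HdivO$. By Theorem~\ref{embed_HOcurl_Hdiv in_tHt} applied to this space (the case $\tv\in\HOcurl\cap\Hdiv$ with $\dv\tv=0$), one has the Poincar\'e-type bound $\left\|\tv\right\|_0\le\left\|\tv\right\|_t\le C\left\|\curl\tv\right\|_0=C\left\|\ttau\right\|_0$. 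Consequently
\begin{equation*}
\sup_{\tw\in\HOcurl\setminus\{\tO\}}
\frac{b(\tw,\ttau)}{\left\|\tw\right\|_{\curl}}
\geq\frac{b(\tv,\ttau)}{\left\|\tv\right\|_{\curl}}
=\frac{\left\|\ttau\right\|_0^2}{\left(\left\|\tv\right\|_0^2+\left\|\ttau\right\|_0^2\right)^{1/2}}
\geq\frac{1}{\sqrt{C^2+1}}\left\|\ttau\right\|_0,
\end{equation*}
which is the desired inf-sup condition with a constant independent of $\ttau$.

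Next I would check coercivity of $a$ on the kernel $K:=\{\tv\in\HOcurl:\ b(\tv,\ttau)=0\ \forall\ttau\in\tM\}$. Since $\tM=\curl(\HOcurl)$, a function $\tv\in K$ satisfies $(\curl\tv,\curl\tw)=0$ for all $\tw\in\HOcurl$, hence $\curl\tv=\tO$ and $\left\|\tv\right\|_{\curl}^2=\left\|\tv\right\|_0^2=a(\tv,\tv)$; thus $a$ is coercive on $K$ with constant $1$. With both Brezzi conditions verified and the right-hand side $\ttau\mapsto(\tg,\ttau)$ being a bounded linear functional on $\tM$, the standard saddle-point theory gives existence, uniqueness and continuous dependence on $\tg$, i.e.\ well-posedness of \eqref{def_T_S_eq}. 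The only slightly delicate point is the inf-sup step, where one must invoke the regularity/Poincar\'e estimate of Theorem~\ref{embed_HOcurl_Hdiv in_tHt} together with the identification $\tM=\HOdivO$ to produce a suitable preimage $\tv$ of $\ttau$ under $\curl$ with controlled $\tLtwo$-norm; everything else is routine.
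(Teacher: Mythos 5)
Your proposal is correct and follows essentially the same route as the paper: Brezzi's saddle-point theory, with coercivity of $a$ on the curl-free kernel being immediate and the inf-sup condition for $b$ obtained from a divergence-free vector potential $\tv\in\HOcurl\cap\HdivO$ of $\ttau$ satisfying $\left\|\tv\right\|_{\curl}\leq C\left\|\ttau\right\|_0$ (the paper cites the Amrouche et al.\ potential theorem directly, whereas you build the potential by projecting off the kernel $\HOcurlO$ and then invoke Theorem~\ref{embed_HOcurl_Hdiv in_tHt} for the bound, which amounts to the same thing). The only blemish is trivial sign bookkeeping coming from your choice $b(\tv,\ttau):=-\left(\curl\tv,\ttau\right)$: with it, $b(\tv,\ttau)=-\left\|\ttau\right\|_0^2$ and the second equation reads $b(\tS\tg,\ttau)=-\left(\tg,\ttau\right)$, so in the inf-sup display you should take $-\tv$ (or work with $\left|b\right|$).
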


\begin{proof}
We define $a(\tu,\tv):=\left(\tu,\tv\right)$ for $\tu,\tv\in\HOcurl$ and
$b(\tv,\ttau):=\left(\curl\tv,\ttau\right)$ for $\tv\in\HOcurl$ and
$\ttau\in\tM$. According to the classical theory for mixed finite
element methods (see, e.g., 
\cite{Boffi_Brezzi_Fortin_MFEMs_applications_13}), it is enough to prove
the ellipticity of $a$ in the kernel of $b$ and the inf-sup condition
for $b$ for the problem to be well-posed.

The kernel of $b$ has the form
\begin{equation*}
\tK:=\left\{\tv\in\HOcurl:
\ \left(\ttau,\curl\tv\right)=0
\ \ \forall\ttau\in\tM\right\}
=\HOcurlO,
\end{equation*}
so that the ellipticity of $a$ in the kernel of $b$ follows immediately:
\begin{equation*}
a(\tv,\tv)
=\left\|\tv\right\|^2_0
=\left\|\tv\right\|^2_{\curl}
\qquad\forall\tv\in\tK.
\end{equation*}

On the other hand, let $\ttau\in\tM$ be arbitrary but fixed. Since
$\tM=\HOdivO$, due to
\cite[Therorem~3.17]{Amrouche_Berna_Dauge_Girault_Vect_poten_3D_nonsmooth_dom_98},
there exists a vector potential $\tv_{\ttau}\in\HOcurl\cap\HdivO$ of
$\ttau$, such that $\curl\tv_{\ttau}=\ttau$ and
$\left\|\tv_{\ttau}\right\|_{\curl}\leq C\left\|\ttau\right\|_0$ (see
\cite[Corollary~3.19]{Amrouche_Berna_Dauge_Girault_Vect_poten_3D_nonsmooth_dom_98}).
Consequently, by taking $\tv:=\tv_{\ttau}$ in the supremum below, we
obtain
\begin{equation*}
\sup_{\tv\in\HOcurl}
\frac{\left(\ttau,\curl\tv\right)}
{\left\|\tv\right\|_{\curl}}
\geq\frac{\left\|\ttau\right\|^2_0}
{\left\|\tv_{\ttau}\right\|_{\curl}}
\geq\frac{1}{C}\left\|\ttau\right\|_0.
\end{equation*}
Since $\ttau\in\tM$ has been chosen arbitrarily, we derive the inf-sup
condition for $b$, which together with the ellipticity of $a$ in the
kernel of $b$ allow us to conclude that the mixed formulation
\eqref{def_T_S_eq} is well-posed.
\end{proof}

In the following lemma we derive some additional regularity for
$\tS\tg$ and $\tT\tg$, which will yield additional regularity for the
solutions of the eigenvalue problem as well.

\begin{lemma}
\label{lm_regul_Tg_Sg}
For all $\tg\in\tM$, $\tS\tg=\curl(\tT\tg)$. Moreover, $\tT\tg$ and
$\tS\tg$ both belong to $\tHt$ with $t\in\left(\frac12,1\right)$ as in
Theorem~\ref{embed_HOcurl_Hdiv in_tHt} and
\begin{equation*}
\left\|\tS\tg\right\|_t
+\left\|\tT\tg\right\|_t
\leq C\left\|\tg\right\|_0.
\end{equation*}
\end{lemma}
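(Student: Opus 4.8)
The plan is to first prove the pointwise identity $\tS\tg=\curl(\tT\tg)$ and then to place both $\tS\tg$ and $\tT\tg$ into a space covered by Theorem~\ref{embed_HOcurl_Hdiv in_tHt}. For the identity I would argue in two steps. Since $\tS\tg\in\HOcurl$, the very definition $\tM=\curl(\HOcurl)$ gives $\curl(\tS\tg)\in\tM$; as $\tg\in\tM$ as well, \eqref{def_T_S_2} says that $\curl(\tS\tg)-\tg$ is $\tLtwo$-orthogonal to all of $\tM$, hence $\curl(\tS\tg)=\tg$. Next, I restrict the test function in \eqref{def_T_S_1} to $\tv\in[C_0^\infty(\Om)]^3\subset\HOcurl$: then $\left(\curl\tv,\tT\tg\right)=\langle\curl(\tT\tg),\tv\rangle$ by the definition of the distributional curl, so \eqref{def_T_S_1} reads $\langle\curl(\tT\tg),\tv\rangle=\left(\tS\tg,\tv\right)$ for every such $\tv$. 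Since $\tS\tg\in\tLtwo$, this shows $\tT\tg\in\Hcurl$ with $\curl(\tT\tg)=\tS\tg$, which is the claimed identity.

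To invoke Theorem~\ref{embed_HOcurl_Hdiv in_tHt} I still need divergence information on each field. Testing \eqref{def_T_S_1} against $\tv=\nabla\phi$ for $\phi\in\HOl$ — admissible because $\nabla(\HOl)=\HOcurlO\subset\HOcurl$ and $\curl(\nabla\phi)=\tO$ — yields $\left(\tS\tg,\nabla\phi\right)=0$; taking $\phi$ smooth with compact support then gives $\dv(\tS\tg)=0$, so $\tS\tg\in\HOcurl\cap\Hdiv$ (indeed $\tS\tg\in\HOcurl\cap\HdivO$). On the other side, $\tT\tg\in\tM=\HOdivO$ means $\tT\tg\in\HOdiv$ with $\dv(\tT\tg)=0$, and combined with $\tT\tg\in\Hcurl$ from the previous step this gives $\tT\tg\in\Hcurl\cap\HOdiv$. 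Theorem~\ref{embed_HOcurl_Hdiv in_tHt} then delivers $\tS\tg,\tT\tg\in\tHt$ together with
\[
\left\|\tS\tg\right\|_t\leq C\left(\left\|\curl(\tS\tg)\right\|_0+\left\|\dv(\tS\tg)\right\|_0\right)=C\left\|\tg\right\|_0,\qquad
\left\|\tT\tg\right\|_t\leq C\left(\left\|\curl(\tT\tg)\right\|_0+\left\|\dv(\tT\tg)\right\|_0\right)=C\left\|\tS\tg\right\|_0.
\]

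Finally, since $t>0$ the continuous embedding $\tHt\hookrightarrow\tLtwo$ gives $\left\|\tS\tg\right\|_0\leq C\left\|\tS\tg\right\|_t\leq C\left\|\tg\right\|_0$, so $\left\|\tT\tg\right\|_t\leq C\left\|\tg\right\|_0$ as well, and adding the two bounds yields the stated estimate. I do not expect a serious obstacle: the only two non-routine observations are that \eqref{def_T_S_1} must be tested against compactly supported fields in order to read off the distributional curl of $\tT\tg$ (the space $\tM$ carrying only the $\tLtwo$ topology, so $\curl(\tT\tg)=\tS\tg$ is not automatic) and against gradient fields in order to recover $\dv(\tS\tg)=0$, which is not encoded in \eqref{def_T_S_eq}; the remainder is bookkeeping with Theorem~\ref{embed_HOcurl_Hdiv in_tHt} and the already-established well-posedness from Lemma~\ref{well_posed_T_S}.
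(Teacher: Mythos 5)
Your argument is correct and essentially the same as the paper's: both derive $\tS\tg=\curl(\tT\tg)$ by testing \eqref{def_T_S_1} against $\tv\in\mathcal{D}(\Om)^3$, place $\tT\tg\in\Hcurl\cap\HOdivO$ and $\tS\tg\in\HOcurl\cap\HdivO$, and conclude via Theorem~\ref{embed_HOcurl_Hdiv in_tHt}. The only cosmetic differences are that you obtain $\dv(\tS\tg)=0$ by testing against gradients (the paper simply notes that a curl is divergence-free) and you bound $\left\|\curl(\tS\tg)\right\|_0$ through the identity $\curl(\tS\tg)=\tg$ rather than through the stability estimate of Lemma~\ref{well_posed_T_S}; both variants are valid.
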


\begin{proof}
The equality $\tS\tg=\curl(\tT\tg)$ follows from \eqref{def_T_S_1} by
taking $\tv\in\mathcal{D}(\Om)^3$. Then,
$\tT\tg\in\Hcurl\cap\HOdivO\hookrightarrow\tHt$ with $1/2<t<1$ (cf.
Theorem~\ref{embed_HOcurl_Hdiv in_tHt}) and
\begin{equation*}
\left\|\tT\tg\right\|_t
\leq C\left\|\curl(\tT\tg)\right\|_0
=C\left\|\tS\tg\right\|_0
\leq C\left\|\tg\right\|_0,
\end{equation*}
where the last inequality holds because of the well-posedness proved in
Lemma~\ref{well_posed_T_S}. On the other hand, since
$\tS\tg=\curl(\tT\tg)$, $\tS\tg\in\HdivO\cap\HOcurl\hookrightarrow\tHt$
(cf. Theorem~\ref{embed_HOcurl_Hdiv in_tHt}, again) and
\begin{equation*}
\left\|\tS\tg\right\|_t
\leq C\left\|\tS\tg\right\|_{\curl}
\leq C\left\|\tg\right\|_0,
\end{equation*}
once more because of Lemma~\ref{well_posed_T_S}. Thus, we conclude the
proof.
\end{proof}

\begin{corollary}
\label{proper_u_sg}
Let $\left(\lambda,\tu,\tsg\right)$ be a solution of
Problem~\ref{Mixed_Formul}. Then,
\begin{equation}
\label{proper_u_sg_2}
\tu=\curl\tsg
\quad\text{and}\quad
\curl\tu=\lambda\tsg
\quad\text{in }\Om.
\end{equation}
Moreover, $\tu,\tsg\in\Htcurl$ and
\begin{equation*}
\left\|\tu\right\|_t
+\left\|\curl\tu\right\|_t
+\left\|\tsg\right\|_t
+\left\|\curl\tsg\right\|_t
\leq C\left\|\tsg\right\|_0
\end{equation*}
with $1/2<t<1$ as in Theorem~\ref{embed_HOcurl_Hdiv in_tHt}.
\end{corollary}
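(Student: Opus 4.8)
The plan is to recognize a solution of Problem~\ref{Mixed_Formul} as being, up to the factor $\lambda$, a fixed point of the pair of solution operators $(\tS,\tT)$, and then to transport to it the regularity already established in Lemma~\ref{lm_regul_Tg_Sg}. First note that $\lambda\neq0$: if $\lambda=0$ then the right-hand side of \eqref{mixed_formul_2} vanishes and, with $\tg:=\tO$, the pair $(\tu,\tsg)$ would solve \eqref{def_T_S_eq}, so the well-posedness of Lemma~\ref{well_posed_T_S} would force $(\tu,\tsg)=(\tS\tO,\tT\tO)=\tO$, contradicting $(\tu,\tsg)\neq\tO$ (this is also recorded in the text after Problem~\ref{Mixed_Formul}). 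Hence, putting $\tg:=\lambda\tsg\in\tM$ and comparing \eqref{mixed_formul} termwise with \eqref{def_T_S_eq}, uniqueness gives $\tu=\tS\tg$ and $\tsg=\tT\tg$.

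Now the identity $\tS\tg=\curl(\tT\tg)$ from Lemma~\ref{lm_regul_Tg_Sg} yields at once $\tu=\curl\tsg$. For the second identity in \eqref{proper_u_sg_2}, I would test \eqref{mixed_formul_2} against an arbitrary $\ttau\in\tM$ to obtain $(\curl\tu-\lambda\tsg,\ttau)=0$; since $\tu\in\HOcurl$ one has $\curl\tu\in\curl(\HOcurl)=\tM$ and also $\tsg\in\tM$, so $\curl\tu-\lambda\tsg$ is an element of $\tM$ that is $\tLtwo$-orthogonal to all of $\tM$, hence zero, which is $\curl\tu=\lambda\tsg$.

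For the regularity, since $\tg\in\tM$ Lemma~\ref{lm_regul_Tg_Sg} gives $\tu=\tS\tg\in\tHt$ and $\tsg=\tT\tg\in\tHt$ with $\|\tu\|_t+\|\tsg\|_t\leq C\|\tg\|_0=C\lambda\|\tsg\|_0$; moreover $\curl\tu=\lambda\tsg\in\tHt$ and $\curl\tsg=\tu\in\tHt$, so both $\tu$ and $\tsg$ lie in $\Htcurl$, and summing the four contributions (using $\curl\tu=\lambda\tsg$ and $\curl\tsg=\tu$) the stated bound follows with a constant absorbing the fixed factor $\lambda$. I do not expect a genuine obstacle here beyond this bookkeeping; the one point needing a moment's care is the orthogonality argument for $\curl\tu=\lambda\tsg$, where it is essential that $\curl$ maps $\HOcurl$ onto $\tM$ --- precisely the definition of $\tM$ --- so that the space of test functions is rich enough to conclude.
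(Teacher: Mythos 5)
Your proposal is correct and follows essentially the same route as the paper: identify $\tu=\tS(\lambda\tsg)$, $\tsg=\tT(\lambda\tsg)$, use $\tS\tg=\curl(\tT\tg)$ from Lemma~\ref{lm_regul_Tg_Sg} for $\tu=\curl\tsg$, get $\curl\tu=\lambda\tsg$ from \eqref{mixed_formul_2} since $\curl\tu-\lambda\tsg\in\tM$, and read off the regularity and the bound from Lemma~\ref{lm_regul_Tg_Sg}. Your extra remark that $\lambda\neq0$ is a harmless addition already noted in the paper's discussion of the equivalence of the two formulations.
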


\begin{proof}
Since $\tu=\tS(\lambda\tsg)$ and $\tsg=\tT(\lambda\tsg)$, due to the
previous lemma, $\tu=\curl\tsg$. Moreover, since
$\curl\tu-\lambda\tsg\in\tM$, equation~\eqref{mixed_formul_2} implies
that $\curl\tu=\lambda\tsg$. The rest of the results follow from
Lemma~\ref{lm_regul_Tg_Sg}.
\end{proof}

\subsection{Finite element spaces}

We set up the notation for introducing finite element approximations of
Problems~\ref{Prim_Formul} and \ref{Mixed_Formul}. We consider a regular
family $\left\{\Th\right\}$ of partitions of the closure of $\Om$ into a
finite number of tetrahedra $K$. As usual, $h:=\max_{K\in\Th}h_K$, where
$h_S$ denotes the diameter of $S$, for any $S\subset\Om$. We denote by
$\Pol_k(K)$ the space of polynomials of degree at most $k$ on $K$ and by
$\tPol_k(K)$ the subspace of homogeneous polynomials of degree $k$.

We consider the N\'ed\'elec space of order $k$,
\begin{equation*}
\NhO:=\left\{\tv_h\in\HOcurl:
\ \tv_h|_K\in {[\Pol_k(K)]}^3\oplus\tx\times{[\tPol_k(K)]}^3
\ \ \forall K\in\Th\right\},
\end{equation*}
the Raviart--Thomas space of order $k$,
\begin{equation*}
\RTO:=\left\{\tv_h\in\HOdiv:
\ \tv_h|_K\in{[\Pol_k(K)]}^3\oplus\tx\tPol_k(K)
\ \ \forall K\in\Th\right\},
\end{equation*}
the Lagrangian finite element space of order $k$,
\begin{equation*}
\LhO:=\left\{v_h\in\mathcal{C}(\bar{\Om}):
\ v_h|_K\in\Pol_k(K)\ \ \forall K\in\Th
\text{ and }v_h=0\text{ on }\DOm\right\}\subset\HOl,
\end{equation*}
and the curl of the N\'ed\'elec space
\begin{equation*}
\tMh:=\curl(\NhO).
\end{equation*}

We will use different interpolants on each of these discrete spaces. In
$\HOcurl$ we will use the N\'ed\'elec interpolant,
\begin{equation*}
\IN:\;\Htcurl\cap\HOcurl\longrightarrow\NhO,
\end{equation*}
which is well-defined provided $t>1/2$. In such a case, we have the
following interpolation error estimate (see
\cite[Theorem~5.41(1)]{Monk_FEMs_Max_eqs_03}):
\begin{equation}
\label{Nedelec_interpol_EE}
\left\|\tv-\IN\tv\right\|_{\curl}
\leq Ch^{\min\{t,k+1\}}\left(\left\|\tv\right\|_t
+\left\|\curl\tv\right\|_t\right).
\end{equation}
The N\'ed\'elec interpolant is also well-defined for $\tv\in\tHt$ with
$1/2<t\leq1$, whenever $\curl\tv\in\RTO$. In such a case,
$\curl(\IN\tv)=\curl\tv$ and we have the following error estimate (see
\cite[Theorem~5.41(2)]{Monk_FEMs_Max_eqs_03}):
\begin{equation}
\label{Nedelec_interpol_EE_disc_curl}
\left\|\tv-\IN\tv\right\|_0
\leq C\left(h^t\left\|\tv\right\|_t
+h\left\|\curl\tv\right\|_0\right).
\end{equation}

In $\HOdiv$ we will use the Raviart--Thomas interpolant,
\begin{equation*}
\IR:\;\tHt\cap\HOdiv\longrightarrow\RTO,
\end{equation*}
which is well-defined provided $t>0$. In case $t>1/2$, the following
error estimate holds true (see
\cite[Theorem~5.25]{Monk_FEMs_Max_eqs_03}):
\begin{equation}
\label{Raviart--Thomas_interpol_EE}
\left\|\tv-\IR\tv\right\|_0
\leq Ch^{\min\{t,k+1\}}\left\|\tv\right\|_t.
\end{equation}
Moreover, it is well-known that for $\tv\in\tHt$
\begin{equation}
\label{div_0-div_Rav_0}
\dv\tv=0\quad\Rightarrow\quad\dv(\IR\tv)=0
\end{equation}
and, for $\tv\in\Htcurl$,
\begin{equation}
\label{curl_Ned-Rav_curl}
\curl(\IN\tv)=\IR(\curl\tv);
\end{equation}
therefore, $\IR(\curl\tv)\in\tMh$. 

The following result will be used in the sequel.

\begin{lemma}
\label{curl_Ned-RTO_HdivO}
For $\Om$ simply connected,
\begin{equation*}
\curl(\NhO)=\RTO\cap\HdivO.
\end{equation*}
Moreover, there exists $C>0$ (independent of $h$) such that, for
all $\ttauh\in\RTO\cap\HdivO$, there exists $\tvh\in\NhO$ that
satisfies $\curl\tvh=\ttauh$ and
\begin{equation*}
\left\|\tvh\right\|_{\curl}\leq C\left\|\ttauh\right\|_0.
\end{equation*}
\end{lemma}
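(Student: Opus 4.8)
The plan is to establish the two assertions separately, starting with the set equality $\curl(\NhO)=\RTO\cap\HdivO$. The inclusion $\subseteq$ is essentially a property of the discrete de Rham complex: for any $\tvh\in\NhO$ we have $\curl\tvh\in\RTO$ (this is the standard fact that the curl maps the $k$-th order N\'ed\'elec space into the $k$-th order Raviart--Thomas space, and the homogeneous tangential boundary condition on $\tvh$ forces the normal component of $\curl\tvh$ to vanish on $\DOm$, so $\curl\tvh\in\HOdiv$), and of course $\dv(\curl\tvh)=0$, so $\curl\tvh\in\RTO\cap\HdivO$. For the reverse inclusion $\supseteq$, I would take $\ttauh\in\RTO\cap\HdivO$. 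Since $\ttauh\in\HOdivO$ and $\Om$ is simply connected with connected boundary, the continuous vector-potential result (Theorem~\ref{embed_HOcurl_Hdiv in_tHt} together with \cite[Corollary~3.19]{Amrouche_Berna_Dauge_Girault_Vect_poten_3D_nonsmooth_dom_98}, exactly as invoked in the proof of Lemma~\ref{well_posed_T_S}) gives $\tv\in\HOcurl\cap\HdivO$ with $\curl\tv=\ttauh$ and $\|\tv\|_{\curl}\le C\|\ttauh\|_0$. Moreover $\tv\in\tHt$ for some $t>1/2$ by the same theorem, so the N\'ed\'elec interpolant $\IN\tv$ is well-defined; since $\curl\tv=\ttauh\in\RTO$, the commuting property gives $\curl(\IN\tv)=\ttauh$, which shows $\ttauh\in\curl(\NhO)$ and completes the set equality.

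For the stability estimate, the obstacle is that the bound $\|\tv\|_{\curl}\le C\|\ttauh\|_0$ from the continuous potential does not immediately transfer to $\IN\tv$, because the N\'ed\'elec interpolation error estimate \eqref{Nedelec_interpol_EE_disc_curl} is not $h$-uniform without controlling $\|\tv\|_t$. The key is that the same vector-potential result bounds the \emph{fractional} norm: $\|\tv\|_t\le C(\|\curl\tv\|_0+\|\dv\tv\|_0)=C\|\ttauh\|_0$, since $\dv\tv=0$. Then \eqref{Nedelec_interpol_EE_disc_curl} with $\curl\tv=\ttauh$ gives
\begin{equation*}
\|\tv-\IN\tv\|_0\le C\left(h^t\|\tv\|_t+h\|\curl\tv\|_0\right)\le C\|\ttauh\|_0,
\end{equation*}
uniformly in $h$ (using $h^t,h\le C$ for bounded meshes), hence $\|\IN\tv\|_0\le\|\tv\|_0+\|\tv-\IN\tv\|_0\le C\|\ttauh\|_0$. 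Combining with $\|\curl(\IN\tv)\|_0=\|\ttauh\|_0$ yields $\|\IN\tv\|_{\curl}\le C\|\ttauh\|_0$, so $\tvh:=\IN\tv$ is the desired discrete potential.

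I expect the main point requiring care to be the $h$-uniformity of the constant: one must invoke the fractional regularity estimate of Theorem~\ref{embed_HOcurl_Hdiv in_tHt} for the continuous potential rather than merely its $\Hcurl$-norm, and then feed it into the interpolation estimate \eqref{Nedelec_interpol_EE_disc_curl}; everything else (the commuting diagram \eqref{curl_Ned-Rav_curl}, the mapping properties of $\curl$ on the discrete spaces) is standard. A minor technical remark is that \eqref{curl_Ned-Rav_curl} as stated applies to $\tv\in\Htcurl$, but here $\curl\tv=\ttauh$ is a fixed Raviart--Thomas function, so the relevant statement is the one recalled right after \eqref{Nedelec_interpol_EE_disc_curl}, namely that the N\'ed\'elec interpolant is well-defined for $\tv\in\tHt$ with $\curl\tv\in\RTO$ and then $\curl(\IN\tv)=\curl\tv$.
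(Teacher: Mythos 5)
Your proposal is correct and follows essentially the same route as the paper: the inclusion $\curl(\NhO)\subset\RTO\cap\HdivO$ from the discrete de Rham structure, the continuous vector potential $\tv\in\HOcurl\cap\HdivO$ from \cite[Theorem~3.17]{Amrouche_Berna_Dauge_Girault_Vect_poten_3D_nonsmooth_dom_98}, the N\'ed\'elec interpolant with the commuting property $\curl(\IN\tv)=\ttauh$, and the $h$-uniform bound obtained by feeding the fractional estimate of Theorem~\ref{embed_HOcurl_Hdiv in_tHt} into \eqref{Nedelec_interpol_EE_disc_curl}. Your closing remark about which interpolation statement applies (the one for $\tv\in\tHt$ with $\curl\tv\in\RTO$) matches exactly the paper's argument.
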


\begin{proof}
The inclusion $\curl(\NhO)\subset\RTO\cap\HdivO$ is well-known (see
\cite[Lemma~5.40]{Monk_FEMs_Max_eqs_03}). To prove the other inclusion,
let $\ttauh\in\RTO\cap\HdivO$. Since $\Om$ is simply connected, there
exists $\tv\in\HOcurl\cap\HdivO$ such that $\ttauh =\curl\tv$ in $\Om$
(see
\cite[Theorem~3.17]{Amrouche_Berna_Dauge_Girault_Vect_poten_3D_nonsmooth_dom_98}).
Then, there exists $t\in\left(\frac12,1\right)$ such that $\tv\in\tHt$
(cf. Theorem~\ref{embed_HOcurl_Hdiv in_tHt}) and $\curl\tv\in\RTO$.
Hence, as mentioned above, its N\'ed\'elec interpolant $\IN\tv\in\NhO$
is well-defined, $\curl(\IN\tv)=\curl\tv=\ttauh$ in $\Om$ and
\eqref{Nedelec_interpol_EE_disc_curl} holds true. Therefore,
$\ttauh\in\curl(\NhO) $. Moreover, as a consequence of
\eqref{Nedelec_interpol_EE_disc_curl} we have that
\begin{align*}
\left\|\IN\tv\right\|_0
\leq\left\|\tv\right\|_0
+C\left(h^t\left\|\tv\right\|_t
+h\left\|\curl\tv\right\|_0\right)
\leq C\left\|\curl\tv\right\|_0
=C\left\|\ttauh\right\|_0,
\end{align*}
where we have used Theorem~\ref{embed_HOcurl_Hdiv in_tHt} for the last
inequality. Thus, since $\curl(\IN\tv)=\ttauh$, we conclude the proof by
taking $\tvh:=\IN\tv$.
\end{proof}

\subsection{Discrete problem}

The finite element approximation of the primal formulation in 
Problem~\ref{Prim_Formul} reads as follows.

\begin{problem}
\label{Prim_Disc}
Find $\left(\lambda_h,\tuh\right)\in\R\times\NhO$, $\tuh\neq\tO$,
such that
\begin{equation*}
\left(\curl\tuh,\curl\tvh\right)
=\lambda_h\left(\tuh,\tvh\right)
\quad\forall\tvh\in\NhO.
\end{equation*}
\end{problem}

The eigenvalues of this problem consist of $\lambda_h=0$ with
corresponding eigenspace $\nabla(\LhO)$ and $\lambda_{h,n}>0$,
$n=1,\ldots,\dime(\NhO)-\dime(\LhO)+1$.

In turn, the finite element approximation of the mixed
formulation in Problem~\ref{Mixed_Formul} is the following.

\begin{problem}
\label{Mixed_Disc}
Find $\left(\lambda_h,\tuh,\tsgh\right)\in\R\times\NhO\times
\tMh$ such that $\left(\tuh,\tsgh\right)\neq\tO$ and
\begin{subequations}
\label{mixed_disc}
\begin{alignat}{3}
&\left(\tuh,\tvh\right)
-\left(\curl\tvh,\tsgh\right)=0 
&& \qquad\forall\tvh\in\NhO,
\label{mixed_disc_1}
\\
& -\left(\curl\tuh,\ttauh\right)
=-\lambda_h\left(\tsgh,\ttauh\right)
&& \qquad\forall\ttauh\in\tMh.
\label{mixed_disc_2}
\end{alignat}
\end{subequations}
\end{problem}

Problems~\ref{Prim_Disc} and \ref{Mixed_Disc} are equivalent for
$\lambda_h\neq 0$ in the same sense as described for the corresponding
continuous problems. In particular, notice that if
$\left(\lambda_h,\tuh,\tsgh\right)\in\R\times\NhO\times\tMh$ is a
solution of Problem~\ref{Mixed_Disc}, then
\begin{equation*}
\left(\curl\tuh-\lambda_h\tsgh,\ttauh\right)=0
\qquad\forall\ttauh\in\tMh
\end{equation*}
and, since clearly $\curl\tuh\in\tMh$, we have that
\begin{equation}
\label{curl_uh-lambh_sgh}
\curl\tuh=\lambda_h\tsgh.
\end{equation}

Further, we define the discrete solution operators
\begin{equation*}
\tTh:\;\tM\longrightarrow\tMh\subset\tM
\qquad\text{and}\qquad
\tSh:\;\tM\longrightarrow\NhO\subset\HOcurl
\end{equation*}
as follows: given $\tg\in\tM$, $(\tSh\tg,\tTh\tg)\in\NhO\times\tMh$ is
the solution of
\begin{subequations}
\label{def_Th_Sh_eq}
\begin{alignat}{3}
& \left(\tSh\tg,\tvh\right)
-\left(\curl\tvh,\tTh\tg\right)=0
&& \qquad\forall\tvh\in\NhO,
\label{def_Th_Sh_1}
\\
& -\left(\curl(\tSh\tg),\ttauh\right)
=-\left(\tg,\ttauh\right)
&& \qquad\forall\ttauh\in\tMh.
\label{def_Th_Sh_2}
\end{alignat}
\end{subequations}

\begin{lemma}
\label{well_posed_Th_Sh}
Equations~\eqref{def_Th_Sh_eq} yield a well-posed problem and
$\left\|\tTh\right\|$ and $\left\|\tSh\right\|$ are bounded uniformly in
$h$.
\end{lemma}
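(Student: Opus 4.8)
The plan is to mimic the analysis of the continuous problem in Lemma~\ref{well_posed_T_S}, working now at the discrete level and verifying the two Brezzi conditions with constants independent of $h$. I would again set $a(\tu,\tv):=\left(\tu,\tv\right)$ and $b(\tv,\ttau):=\left(\curl\tv,\ttau\right)$, restricted to $\NhO\times\tMh$, and invoke the standard theory for mixed finite element methods (as cited already) to conclude well-posedness once uniform discrete ellipticity-on-the-kernel and a uniform discrete inf-sup condition are established. Uniformity of those two constants is exactly what yields the uniform bounds on $\left\|\tTh\right\|$ and $\left\|\tSh\right\|$.

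First I would identify the discrete kernel
\begin{equation*}
\tK_h:=\left\{\tvh\in\NhO:\ \left(\curl\tvh,\ttauh\right)=0\ \ \forall\ttauh\in\tMh\right\}.
\end{equation*}
Since $\curl\tvh\in\tMh$ by definition of $\tMh$, taking $\ttauh:=\curl\tvh$ shows $\tvh\in\tK_h$ iff $\curl\tvh=\tO$; hence $a(\tvh,\tvh)=\left\|\tvh\right\|_0^2=\left\|\tvh\right\|_{\curl}^2$ for all $\tvh\in\tK_h$, giving ellipticity on the kernel with constant $1$, independent of $h$. This is the easy half and is the exact discrete analogue of the continuous argument.

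For the discrete inf-sup condition, fix $\ttauh\in\tMh$. By Lemma~\ref{curl_Ned-RTO_HdivO}, $\ttauh\in\RTO\cap\HdivO$ and there exists $\tvh\in\NhO$ with $\curl\tvh=\ttauh$ and $\left\|\tvh\right\|_{\curl}\leq C\left\|\ttauh\right\|_0$, with $C$ independent of $h$. Taking this $\tvh$ as a competitor in the supremum,
\begin{equation*}
\sup_{\tvh\in\NhO}\frac{\left(\curl\tvh,\ttauh\right)}{\left\|\tvh\right\|_{\curl}}
\geq\frac{\left\|\ttauh\right\|_0^2}{\left\|\tvh\right\|_{\curl}}
\geq\frac{1}{C}\left\|\ttauh\right\|_0,
\end{equation*}
which is the desired inf-sup condition, uniform in $h$. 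I expect this to be the main point of the proof, but the work has effectively been front-loaded into Lemma~\ref{curl_Ned-RTO_HdivO}: it is precisely the $h$-uniform discrete vector potential with controlled $\curl$-norm that makes the inf-sup constant independent of the mesh. Once both Brezzi conditions hold uniformly, the classical mixed theory gives that $(\tSh\tg,\tTh\tg)$ is uniquely determined and that $\left\|\tSh\tg\right\|_{\curl}+\left\|\tTh\tg\right\|_0\leq C\left\|\tg\right\|_0$ with $C$ independent of $h$, which is the assertion that $\left\|\tTh\right\|$ and $\left\|\tSh\right\|$ are bounded uniformly in $h$.
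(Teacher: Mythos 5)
Your proposal is correct and follows essentially the same path as the paper: identify the discrete kernel (where $a$ is coercive with constant $1$, exactly as in the continuous case), obtain the $h$-uniform discrete inf-sup condition from the discrete vector potential of Lemma~\ref{curl_Ned-RTO_HdivO}, and conclude by the classical Brezzi theory. The paper's proof is just a more compressed version of the same argument, so there is nothing to add.
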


\begin{proof}
The discrete kernel of $b$ takes the form
\begin{equation*}
\tK_h:=\left\{\tvh\in\NhO:
\ \left(\ttauh,\curl\tvh\right)=0
\ \ \forall\ttauh\in\curl(\NhO)\right\}
=\NhO\cap\HcurlO\subset\tK
\end{equation*}
and the ellipticity of $a$ in $\tK$ has been proved in
Lemma~\ref{well_posed_T_S}. The discrete inf-sup condition follows
immediately from Lemma~\ref{curl_Ned-RTO_HdivO} with a constant
independent of $h$. Thus the proof follows from these two conditions and
the classical theory for mixed finite element methods (see, e.g.,
\cite{Boffi_Brezzi_Fortin_MFEMs_applications_13}).
\end{proof}

In what follows we will establish convergence properties for $\tSh$ and
$\tTh$.

\begin{lemma}
\label{conv_prop_Th_Sh_g_regul}
If $\tg\in\tM\cap\tHt$ with $t\in\left(\frac12,1\right)$ as in
Theorem~\ref{embed_HOcurl_Hdiv in_tHt}, then
\begin{equation*}
\left\|\left(\tS-\tSh\right)\tg\right\|_{\curl}
+\left\|\left(\tT-\tTh\right)\tg\right\|_0
\leq Ch^t\left\|\tg\right\|_t.
\end{equation*}
\end{lemma}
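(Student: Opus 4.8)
The plan is to prove the estimate by combining C\'ea-type quasi-optimality for the mixed discretization \eqref{def_Th_Sh_eq} with the interpolation estimates recorded earlier. First I would invoke Lemma~\ref{lm_regul_Tg_Sg}: for $\tg\in\tM\cap\tHt$ we have $\tS\tg=\curl(\tT\tg)$, both functions lie in $\tHt$, and $\tS\tg\in\HdivO$. The key observation is that $\tS\tg=\curl(\tT\tg)\in\Htcurl$, since $\curl(\tS\tg)=\curl(\curl(\tT\tg))$ can be controlled — actually the cleaner route is to note that by \eqref{def_T_S_2} we have $\curl(\tS\tg)\in\tM$ with $(\curl(\tS\tg),\ttau)=(\tg,\ttau)$ for all $\ttau\in\tM$, hence $\curl(\tS\tg)=\tg\in\tHt$. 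Thus $\tS\tg\in\Htcurl\cap\HOcurl$ and $\tT\tg\in\Htcurl$ as well (its curl is $\tS\tg\in\tHt$), so the N\'ed\'elec interpolant $\IN(\tS\tg)\in\NhO$ and the Raviart--Thomas interpolant of its curl are available.

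The heart of the argument is a standard mixed-method error analysis. Because the bilinear form $a$ is elliptic on all of $\HOcurl$ restricted to the relevant kernels and the discrete inf-sup condition holds uniformly in $h$ (Lemma~\ref{well_posed_Th_Sh}), one has the quasi-optimal bound
\begin{equation*}
\left\|\tS\tg-\tSh\tg\right\|_{\curl}
+\left\|\tT\tg-\tTh\tg\right\|_0
\leq C\left(\inf_{\tvh\in\NhO}\left\|\tS\tg-\tvh\right\|_{\curl}
+\inf_{\ttauh\in\tMh}\left\|\tT\tg-\ttauh\right\|_0\right).
\end{equation*}
To make this work one must check that the right-hand-side data are compatible at the discrete level, i.e. that $\tg$ in \eqref{def_Th_Sh_2} is tested only against $\ttauh\in\tMh$, which it is; the consistency $(\tS\tg-\tSh\tg,\tvh)-(\curl\tvh,\tT\tg-\tTh\tg)=0$ and $(\curl(\tS\tg-\tSh\tg),\ttauh)=0$ for all $\tvh\in\NhO$, $\ttauh\in\tMh$ follows by subtracting \eqref{def_Th_Sh_eq} from \eqref{def_T_S_eq} (legitimate since $\NhO\subset\HOcurl$ and $\tMh\subset\tM$). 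Then I would bound the two infima: choosing $\tvh=\IN(\tS\tg)$ and using \eqref{Nedelec_interpol_EE} with $\tS\tg\in\Htcurl$ gives $\left\|\tS\tg-\IN(\tS\tg)\right\|_{\curl}\leq Ch^{\min\{t,k+1\}}(\|\tS\tg\|_t+\|\curl(\tS\tg)\|_t)\leq Ch^t\|\tg\|_t$, where the last step uses Corollary-type regularity bounds $\|\tS\tg\|_t+\|\tg\|_t\leq C\|\tg\|_t$ (note $\curl(\tS\tg)=\tg$). Similarly $\choosing \ttauh=\IR(\tT\tg)$ — wait, we need $\ttauh\in\tMh=\curl(\NhO)$; but $\IR(\curl(\tT\tg))=\curl(\IN(\tT\tg))\in\tMh$ by \eqref{curl_Ned-Rav_curl}, and since $\tS\tg=\curl(\tT\tg)$, taking $\ttauh:=\IR(\tS\tg)\in\tMh$ and applying \eqref{Raviart--Thomas_interpol_EE} yields $\left\|\tT\tg-\IR(\tS\tg)\right\|_0$ — this is not quite right either, since $\IR(\tS\tg)$ approximates $\tS\tg$, not $\tT\tg$. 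The correct choice is $\ttauh:=\curl(\IN(\tT\tg))\in\tMh$, giving $\|\tT\tg-\ttauh\|_0\leq\|\tT\tg-\IN(\tT\tg)\|_0+\|\IN(\tT\tg)-\curl\cdots\|$ — better: use directly $\ttauh$ such that $\|\tT\tg-\ttauh\|_0\leq\|\tT\tg-\IN(\tT\tg)\|_0\leq Ch^t\|\tg\|_0$ via \eqref{Nedelec_interpol_EE_disc_curl} applied to $\tT\tg$ (whose curl $\tS\tg$ lies in $\RTO$? no, $\tS\tg\in\tMh$ only after projection).

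The main obstacle, then, is the discrete approximability of $\tT\tg$ in $\tMh$ in the $\tL^2$-norm with rate $h^t$, since $\tMh$ is not the natural interpolation target for $\tT\tg$. The resolution I would pursue: since $\tS\tg=\curl(\tT\tg)\in\Htcurl\cap\HOcurl$, the interpolant $\IN(\tT\tg)$ satisfies $\curl(\IN(\tT\tg))=\IR(\curl(\tT\tg))=\IR(\tS\tg)\in\tMh$; more usefully, invoke Lemma~\ref{curl_Ned-RTO_HdivO}: pick a discrete vector potential. Alternatively — and this is cleanest — one proves a Fortin-type commuting property and a duality/Aubin--Nitsche argument giving the $\tL^2$ estimate for $\tT\tg-\tTh\tg$ from the $\Hcurl$ estimate on $\tS\tg-\tSh\tg$. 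Concretely, I expect the author to deduce the $\|\tT\tg-\tTh\tg\|_0$ bound directly: test the consistency identity $(\tS\tg-\tSh\tg,\tvh)=(\curl\tvh,\tT\tg-\tTh\tg)$ with a well-chosen $\tvh$ provided by Lemma~\ref{curl_Ned-RTO_HdivO} (a discrete potential of the $\tL^2$-projection of $\tT\tg-\tTh\tg$ onto $\tMh$, whose curl is controlled), reducing everything to $\|\tS\tg-\tSh\tg\|_0$, which is already $O(h^t)$. Assembling these pieces gives the claimed $Ch^t\|\tg\|_t$ bound; the only genuinely delicate points are the regularity bookkeeping (verifying $\tg\in\tHt\Rightarrow\tS\tg\in\Htcurl$ with norm controlled by $\|\tg\|_t$, not just $\|\tg\|_0$) and handling the $\tL^2$ estimate for the $\tMh$-variable, which I would do via the discrete potential of Lemma~\ref{curl_Ned-RTO_HdivO} rather than direct interpolation.
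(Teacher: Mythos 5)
You correctly identify the paper's skeleton: quasi-optimality for the mixed discretization \eqref{def_Th_Sh_eq} (valid by Lemmas~\ref{well_posed_T_S} and \ref{well_posed_Th_Sh}), the choice $\tvh:=\IN(\tS\tg)$ for the primal infimum using $\curl(\tS\tg)=\tg\in\tHt$ and Lemma~\ref{lm_regul_Tg_Sg}, and the estimate \eqref{Nedelec_interpol_EE}. But the point you flag as ``the main obstacle'' --- approximating $\tT\tg$ in $\tMh$ in the $\tL^2$-norm --- is exactly where your argument has a genuine gap, and the paper's resolution is much simpler than anything you propose: since $\tT\tg\in\tM\cap\tHt\subset\HOdiv\cap\tHt$, its Raviart--Thomas interpolant $\IR(\tT\tg)$ is well-defined, and \eqref{div_0-div_Rav_0} gives $\dv(\IR(\tT\tg))=0$, so by Lemma~\ref{curl_Ned-RTO_HdivO} one has $\IR(\tT\tg)\in\RTO\cap\HdivO=\curl(\NhO)=\tMh$; then \eqref{Raviart--Thomas_interpol_EE} yields $\left\|\tT\tg-\IR(\tT\tg)\right\|_0\leq Ch^t\left\|\tT\tg\right\|_t\leq Ch^t\left\|\tg\right\|_0$. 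You never hit on this: your candidate choices are either not approximations of $\tT\tg$ at all ($\IR(\tS\tg)$, $\curl(\IN(\tT\tg))$ approximate $\tS\tg$), or not even well-defined ($\IN(\tT\tg)$ requires $\tT\tg\in\HOcurl$, but $\tT\tg$ has vanishing normal, not tangential, trace, so it is not in the domain of $\IN$ as defined into $\NhO$).

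Your fallback --- bounding $\left\|\tT\tg-\tTh\tg\right\|_0$ by testing the error equation with a discrete potential from Lemma~\ref{curl_Ned-RTO_HdivO} and ``reducing everything to $\left\|\tS\tg-\tSh\tg\right\|_0$, which is already $O(h^t)$'' --- is circular as written: the quasi-optimal bound you invoked is the only source of the $\tS$-estimate, and it contains $\inf_{\ttauh\in\tMh}\left\|\tT\tg-\ttauh\right\|_0$, precisely the quantity you are trying to avoid. That route can in principle be repaired (e.g.\ by using the refined mixed estimate exploiting $\tK_h\subset\tK$ so that the primal error depends only on primal approximability, and then the projection bound of Lemma~\ref{estim_b-P_h_b}, proved independently later via a vector potential), and the vague appeal to a Fortin/duality argument points in a workable direction, but none of this is carried out; as it stands the second infimum is not controlled and the lemma is not proved. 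The one-line observation $\IR(\tT\tg)\in\tMh$ is the missing idea.
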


\begin{proof}
Let $\tg\in\tM\cap\tHt$ with $t\in\left(\frac12,1\right)$ as in
Theorem~\ref{embed_HOcurl_Hdiv in_tHt}. By virtue of
Lemmas~\ref{well_posed_T_S} and \ref{well_posed_Th_Sh}, from the
classical approximation theory for mixed finite elements (see, e.g., 
\cite{Boffi_Brezzi_Fortin_MFEMs_applications_13}) we have that
\begin{equation}
\label{conv_prop_Th_Sh_g_regul_ee}
\left\|\tS\tg-\tSh\tg\right\|_{\curl}
+\left\|\tT\tg-\tTh\tg\right\|_0 
\leq C\left(\inf_{\tvh\in\NhO}\left\|\tS\tg-\tvh\right\|_{\curl}
+\inf_{\ttauh\in\tMh}\left\|\tT\tg-\ttauh\right\|_0\right).
\end{equation}
Notice that, for $\tg\in\tM$, due to \eqref{def_T_S_2},
$\curl(\tS\tg)=\tg$. Hence, the assumed additional regularity,
$\tg\in\tHt$, together with the fact that $\tS\tg\in\tHt$ (cf.
Lemma~\ref{lm_regul_Tg_Sg}) yield that the N\'ed\'elec interpolant of
$\tS\tg$ is well-defined. Thus, we can take $\tvh:=\IN(\tS\tg)$ in
\eqref{conv_prop_Th_Sh_g_regul_ee} and using \eqref{Nedelec_interpol_EE}
and Lemma~\ref{lm_regul_Tg_Sg}, we obtain
\begin{equation*}
\left\|\tS\tg-\IN(\tS\tg)\right\|_{\curl}
\leq Ch^t\left(\left\|\tS\tg\right\|_t
+\left\|\tg\right\|_t\right)
\leq Ch^t\left\|\tg\right\|_t.
\end{equation*}
On the other hand, because of Lemma~\ref{lm_regul_Tg_Sg},
$\tT\tg\in\tHt$. Thus, since $\tT\tg\in\tM\subset\HOdivO$,
\eqref{div_0-div_Rav_0} implies that $\dv(\IR(\tT\tg))=0$ in $\Om$.
Therefore, $\IR(\tT\tg)\in\tMh$ (see Lemma~\ref{curl_Ned-RTO_HdivO}) and
we can take $\ttauh:=\IR(\tT\tg)$ in \eqref{conv_prop_Th_Sh_g_regul_ee}.
Using \eqref{Raviart--Thomas_interpol_EE} and Lemma~\ref{lm_regul_Tg_Sg}
again, we obtain
\begin{equation*}
\left\|\tT\tg-\IR(\tT\tg)\right\|_0
\leq Ch^t\left\|\tT\tg\right\|_t
\leq Ch^t\left\|\tg\right\|_0.
\end{equation*}
We conclude the proof by combining the above estimates.
\end{proof}

It is also possible to prove a similar approximation property for
$\tSh$ and $\tTh$ when the right-hand side $\tg$ lies in the discrete
space $\tMh$. In fact, we have the following result.

\begin{lemma}
\label{conv_prop_Th_Sh_g_disc}
If $\tg\in\tMh$, then
\begin{equation*}
\left\|\left(\tS-\tSh\right)\tg\right\|_{\curl}
+\left\|\left(\tT-\tTh\right)\tg\right\|_0
\leq Ch^t\left\|\tg\right\|_0
\end{equation*}
with $t\in\left(\frac12,1\right)$ such that
Theorem~\ref{embed_HOcurl_Hdiv in_tHt} holds true.
\end{lemma}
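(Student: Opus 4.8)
The plan is to mimic the proof of Lemma~\ref{conv_prop_Th_Sh_g_regul}, but since $\tg\in\tMh$ need not be smooth enough to allow the N\'ed\'elec interpolant of $\tS\tg$ to inherit a full $h^t$ estimate through $\|\tg\|_t$, I would instead exploit the discrete commuting-diagram structure to reduce everything to the $\tL^2$-norm of $\tg$. First I would recall from \eqref{conv_prop_Th_Sh_g_regul_ee} (which holds for any $\tg\in\tM$, by Lemmas~\ref{well_posed_T_S} and \ref{well_posed_Th_Sh} and the standard mixed theory) that
\begin{equation*}
\left\|\left(\tS-\tSh\right)\tg\right\|_{\curl}
+\left\|\left(\tT-\tTh\right)\tg\right\|_0
\leq C\left(\inf_{\tvh\in\NhO}\left\|\tS\tg-\tvh\right\|_{\curl}
+\inf_{\ttauh\in\tMh}\left\|\tT\tg-\ttauh\right\|_0\right),
\end{equation*}
so the task is to bound the two best-approximation errors by $Ch^t\|\tg\|_0$.

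For the second term, since $\tg\in\tMh\subset\tM\subset\HOdivO$, Lemma~\ref{lm_regul_Tg_Sg} gives $\tT\tg\in\tHt\cap\HOdivO$ and $\|\tT\tg\|_t\le C\|\tg\|_0$; then $\IR(\tT\tg)\in\tMh$ by \eqref{div_0-div_Rav_0} and Lemma~\ref{curl_Ned-RTO_HdivO}, and \eqref{Raviart--Thomas_interpol_EE} yields $\|\tT\tg-\IR(\tT\tg)\|_0\le Ch^t\|\tT\tg\|_t\le Ch^t\|\tg\|_0$, exactly as before. The first term is the delicate one. The key observation is that $\curl(\tS\tg)=\tg$ by \eqref{def_T_S_2}, and since $\tg\in\tMh=\curl(\NhO)=\RTO\cap\HdivO$, we have $\curl(\tS\tg)\in\RTO$. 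Therefore the N\'ed\'elec interpolant $\IN(\tS\tg)$ is well-defined under the weaker hypothesis $\tS\tg\in\tHt$ with $1/2<t\le1$ (which holds by Lemma~\ref{lm_regul_Tg_Sg}), and by \eqref{curl_Ned-Rav_curl}/the property quoted after it, $\curl(\IN(\tS\tg))=\curl(\tS\tg)=\tg$. Consequently $\|\curl(\tS\tg-\IN(\tS\tg))\|_0=0$, and the sharper interpolation estimate \eqref{Nedelec_interpol_EE_disc_curl} gives
\begin{equation*}
\left\|\tS\tg-\IN(\tS\tg)\right\|_{\curl}
=\left\|\tS\tg-\IN(\tS\tg)\right\|_0
\leq C\left(h^t\left\|\tS\tg\right\|_t
+h\left\|\curl(\tS\tg)\right\|_0\right)
\leq Ch^t\left\|\tg\right\|_0,
\end{equation*}
where the last step uses $\|\tS\tg\|_t\le C\|\tg\|_0$ from Lemma~\ref{lm_regul_Tg_Sg}, the bound $\|\curl(\tS\tg)\|_0=\|\tg\|_0$, and $h\le Ch^t$ since $t\le1$ and $h$ is bounded. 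Taking $\tvh:=\IN(\tS\tg)$ in the infimum above then finishes the argument by combining the two estimates.

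I expect the main obstacle to be recognizing that one must \emph{not} try to interpolate $\tS\tg$ via \eqref{Nedelec_interpol_EE} (which would require controlling $\|\tg\|_t$, unavailable for a generic $\tg\in\tMh$), but rather use the discrete-curl version \eqref{Nedelec_interpol_EE_disc_curl}, whose applicability hinges precisely on the identity $\tMh=\RTO\cap\HdivO$ from Lemma~\ref{curl_Ned-RTO_HdivO} ensuring $\curl(\tS\tg)\in\RTO$. Once that structural point is in place, the rest is a routine assembly of estimates already established in the excerpt. A minor care point is the factor $h$ versus $h^t$: since $t<1$ one has $h\le h^t$ only up to the mesh being bounded, so I would absorb it into the generic constant $C$ as noted, consistent with the convention fixed at the start of Section~\ref{sec_conti_disc_prob}.
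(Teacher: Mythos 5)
Your proposal is correct and follows essentially the same route as the paper: both use the quasi-optimality bound \eqref{conv_prop_Th_Sh_g_regul_ee}, take $\ttauh:=\IR(\tT\tg)$ exactly as in Lemma~\ref{conv_prop_Th_Sh_g_regul}, and handle $\tS\tg$ via the discrete-curl interpolation estimate \eqref{Nedelec_interpol_EE_disc_curl} together with $\curl(\IN(\tS\tg))=\curl(\tS\tg)=\tg$ (from \eqref{curl_Ned-Rav_curl} and $\tg\in\tMh\subset\RTO$), so the curl part of the error vanishes. No gaps.
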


\begin{proof}
The proof runs almost identical to that of
Lemma~\ref{conv_prop_Th_Sh_g_regul}. The only difference is that, now,
$\curl(\tS\tg)=\tg\in\tMh$ which does not lie necessarily in $\tHt$.
However, as claimed above, $\IN(\tS\tg)$ is also well-defined and
\eqref{Nedelec_interpol_EE_disc_curl} holds true, namely,
\begin{equation*}
\left\|\tS\tg-\IN(\tS\tg)\right\|_0 
\leq C\left(h^t\left\|\tS\tg\right\|_t
+h\left\|\curl(\tS\tg)\right\|_0\right)
\leq Ch^t\left\|\tg\right\|_0,
\end{equation*}
where the last inequality is a consequence of
Lemma~\ref{lm_regul_Tg_Sg}. Since according to \eqref{curl_Ned-Rav_curl}
we have that  $\curl(\tS\tg)-\curl(\IN(\tS\tg))
=\curl(\tS\tg)-\IR(\curl(\tS\tg))=\tg-\IR\tg=\tO$, we conclude the proof
by taking $\tvh:=\IN(\tS\tg)$ and $\ttauh:=\IR(\tT\tg)$ as in the proof
of Lemma~\ref{conv_prop_Th_Sh_g_regul}.
\end{proof}

\section{A superconvergence result} 
\label{sec_superconv}
\setcounter{equation}{0}

The aim of this section is to obtain a superconvergence result which
will be central for the a posteriori error analysis that will be
developed in the following section. With this aim, we will adapt some
results from \cite{Lin_Xie_superconv_MFEAs_eig_prob_12} to our case. 

First, we recall some a priori approximation results. From now on, we
fix $t\in\left(\frac12,1\right)$ as in  Theorem~\ref{embed_HOcurl_Hdiv
in_tHt}. Moreover, for the sake of simplicity, we will focus our
attention on approximating a simple eigenvalue. Therefore, let $\lambda$
be a fixed eigenvalue of Problem~\ref{Mixed_Formul} with multiplicity
one. Let $\left(\tu,\tsg\right)$ be an associated eigenfunction which we
normalize by taking $\left\|\tsg\right\|_0=1$. As shown in
\cite{Boffi_Fortin_oper_disc_comp_EEs_00}, there exists a simple
eigenvalue $\lambda_h$ of Problem~\ref{Mixed_Disc} that converges to
$\lambda$ as $h$ goes to zero. Moreover, there exists an associated
eigenfunction $\left(\tuh,\tsgh\right)$, which we can take also
normalized by $\left\|\tsgh\right\|_0=1$, such that the following a
priori error estimates holds true.

\begin{theorem}
There hold:
\begin{align}
\left|\lambda-\lambda_h\right|
& \leq C\inf_{\tvh\in\NhO,\,\ttauh\in\tMh}
\left(\left\|\tu-\tvh\right\|^2_{\curl}
+\left\|\tsg - \ttauh\right\|^2_0\right)
\leq Ch^{2t},
\label{a priori estim_lambda_h}
\\
\left\|\tsg-\tsgh\right\|_0
& \leq C\inf_{\tvh\in\NhO,\,\ttauh\in\tMh}
\left(\left\|\tu-\tvh\right\|_{\curl}
+\left\|\tsg-\ttauh\right\|_0\right)
\leq Ch^t.
\label{a priori estim_tsgh}
\end{align}
\end{theorem}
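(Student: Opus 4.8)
The plan is to treat this as a standard application of the Babuška--Osborn spectral approximation theory for the solution operators $\tT$ and $\tTh$, combined with the a priori approximation estimates for $(\tS-\tSh)$ and $(\tT-\tTh)$ already established in Lemmas~\ref{conv_prop_Th_Sh_g_regul} and \ref{conv_prop_Th_Sh_g_disc}, plus the extra regularity of the eigenfunction from Corollary~\ref{proper_u_sg}. First I would observe that $\tT:\tM\to\tM$ is compact and self-adjoint with respect to the $\tLtwo$-inner product (self-adjointness follows by testing the definition \eqref{def_T_S_eq} symmetrically; compactness follows from the regularity gain $\tT\tg\in\tHt$ in Lemma~\ref{lm_regul_Tg_Sg} and the compact embedding $\tHt\hookrightarrow\tLtwo$). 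The eigenpair $(\lambda,\tsg)$ of Problem~\ref{Mixed_Formul} corresponds to the eigenvalue $\mu:=1/\lambda$ of $\tT$, namely $\tT\tsg=\tfrac1\lambda\tsg$, and likewise $\tTh\tsgh=\tfrac1{\lambda_h}\tsgh$ for the discrete operator. Since $\tT$ is self-adjoint, Osborn's theorem gives the eigenvalue estimate in terms of $\|(\tT-\tTh)|_{E}\|$, where $E$ is the (one-dimensional) eigenspace; more precisely the spectral gap argument yields $|\mu-\mu_h|\le C\|(\tT-\tTh)\tsg\|_0$, and in the self-adjoint case this improves to the quadratic bound $|\mu-\mu_h|\le C\|(\tT-\tTh)\tsg\|_0^2$ up to higher-order terms, which translates to $|\lambda-\lambda_h|$ by $|\lambda-\lambda_h|=\lambda\lambda_h|\mu-\mu_h|$ and boundedness of $\lambda_h$.

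Next, to convert the operator-norm bounds into the infimum quantities displayed in \eqref{a priori estim_lambda_h}--\eqref{a priori estim_tsgh}, I would invoke the quasi-optimality estimate \eqref{conv_prop_Th_Sh_g_regul_ee} from the classical mixed theory applied with right-hand side $\tg=\lambda\tsg$; since $\tS(\lambda\tsg)=\tu$ and $\tT(\lambda\tsg)=\tsg$, this gives exactly
\begin{equation*}
\|\tu-\tSh(\lambda\tsg)\|_{\curl}+\|\tsg-\tTh(\lambda\tsg)\|_0
\le C\Big(\inf_{\tvh\in\NhO}\|\tu-\tvh\|_{\curl}+\inf_{\ttauh\in\tMh}\|\tsg-\ttauh\|_0\Big),
\end{equation*}
and the standard Babuška--Osborn machinery identifies $\|\tsg-\tsgh\|_0$ and $|\lambda-\lambda_h|$ with these source-problem errors (the eigenvalue error being the square). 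The final $h$-rates then follow by taking $\tvh=\IN\tu$ and $\ttauh=\IR(\curl\tsg)\in\tMh$ (legitimate by \eqref{curl_Ned-Rav_curl} and \eqref{div_0-div_Rav_0} since $\curl\tsg=\tu$ and $\tsg$ is divergence-free), using the interpolation estimates \eqref{Nedelec_interpol_EE} and \eqref{Raviart--Thomas_interpol_EE} together with the regularity bound $\|\tu\|_t+\|\curl\tu\|_t+\|\tsg\|_t+\|\curl\tsg\|_t\le C$ from Corollary~\ref{proper_u_sg} (recall $\|\tsg\|_0=1$); this yields $O(h^t)$ for the eigenfunction and hence $O(h^{2t})$ for the eigenvalue.

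The point requiring the most care is the passage to the \emph{quadratic} rate in the eigenvalue estimate and the verification that the chosen discrete eigenpair $(\lambda_h,\tuh,\tsgh)$ is the one to which the abstract theory applies with the stated normalization. The quadratic bound is not automatic from a generic perturbation argument; it relies on self-adjointness of $\tT$ and on the identity $\lambda-\lambda_h = \|\curl(\tu-\tuh)\|_0^2 - \lambda\|\tsg-\tsgh\|_0^2$ (or its analogue from the mixed Rayleigh-quotient), which must be combined with the fact that $\|\tsg-\tsgh\|_0$ and $\|\curl(\tu-\tuh)\|_0$ are each controlled by the infimum quantities. I would also need the a priori $L^2$-convergence $\|\tsg-\tsgh\|_0\to0$ and $\lambda_h\to\lambda$ as inputs, both guaranteed by \cite{Boffi_Fortin_oper_disc_comp_EEs_00} and by Lemma~\ref{well_posed_Th_Sh} (uniform boundedness of $\tSh,\tTh$) together with the density/consistency already implicit in Lemmas~\ref{conv_prop_Th_Sh_g_regul}--\ref{conv_prop_Th_Sh_g_disc}; these justify discarding the higher-order terms so that the displayed inequalities hold for $h$ small enough.
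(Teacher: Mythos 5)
Your derivation of the $h$-rates is essentially the paper's: the paper also takes the N\'ed\'elec interpolant of $\tu$ and a Raviart--Thomas interpolant in the second infimum and concludes via \eqref{Nedelec_interpol_EE}, \eqref{Raviart--Thomas_interpol_EE}, Corollary~\ref{proper_u_sg} and $\|\tsg\|_0=1$. There is, however, a slip in your choice of $\ttauh$: you take $\ttauh=\IR(\curl\tsg)$, but $\curl\tsg=\tu$, so this is $\IR\tu$ and does not approximate $\tsg$ in $\inf_{\ttauh\in\tMh}\|\tsg-\ttauh\|_0$. The correct choice is $\ttauh:=\IR\tsg$, which lies in $\tMh$ because $\dv\tsg=0$ and \eqref{div_0-div_Rav_0} give $\IR\tsg\in\RTO\cap\HdivO=\curl(\NhO)$ (Lemma~\ref{curl_Ned-RTO_HdivO}); equivalently, $\IR\tsg=\tfrac1\lambda\IR(\curl\tu)=\tfrac1\lambda\curl(\IN\tu)$ by \eqref{curl_Ned-Rav_curl}.

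The genuine gap is in the first step. The paper does not prove the bounds of $|\lambda-\lambda_h|$ and $\|\tsg-\tsgh\|_0$ by the infima at all: it cites \cite[Theorem~2]{Boffi_Fortin_oper_disc_comp_EEs_00}. You attempt to re-derive them, and two key claims do not survive scrutiny. First, the doubled-order eigenvalue bound is not a consequence of self-adjointness of $\tT$ alone (for a mixed discretization $\tTh$ is not a Ritz--Galerkin projection of $\tT$), and the identity you invoke to support it is false under the normalization used here: with $\|\tsg\|_0=\|\tsgh\|_0=1$, $\curl\tu=\lambda\tsg$ (Corollary~\ref{proper_u_sg}) and $\curl\tuh=\lambda_h\tsgh$ (see \eqref{curl_uh-lambh_sgh}), a direct computation gives $\|\curl(\tu-\tuh)\|_0^2-\lambda\lambda_h\|\tsg-\tsgh\|_0^2=(\lambda-\lambda_h)^2$, so your expression $\|\curl(\tu-\tuh)\|_0^2-\lambda\|\tsg-\tsgh\|_0^2$ equals $(\lambda-\lambda_h)^2+\lambda\,(\lambda_h-1)\|\tsg-\tsgh\|_0^2$, not $\lambda-\lambda_h$; and even the correct identity only yields the first-order bound $|\lambda-\lambda_h|\le\|\curl(\tu-\tuh)\|_0$. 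Second, the quasi-optimality estimate \eqref{conv_prop_Th_Sh_g_regul_ee} concerns the source problem with datum $\lambda\tsg$, i.e.\ it controls $\tu-\tSh(\lambda\tsg)$ and $\tsg-\tTh(\lambda\tsg)$, not the discrete eigenpair $(\tuh,\tsgh)$; identifying $\|\tsg-\tsgh\|_0$ (and the squared-order eigenvalue error) with these best-approximation quantities is precisely the nontrivial content of the spectral approximation theory for this mixed problem (uniform convergence of $\tTh$ to $\tT$, convergence of spectral projections, absence of spurious modes), which your sketch assumes under the label ``standard Babu\v{s}ka--Osborn machinery'' instead of proving. As it stands the argument would have to be completed along the lines of \cite{Boffi_Fortin_oper_disc_comp_EEs_00}; alternatively, do as the paper does and cite that theorem, after which your rate computation (with the corrected $\ttauh$) finishes the proof.
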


\begin{proof}
The estimates of $\left|\lambda-\lambda_h\right|$ and
$\left\|\tsg-\tsgh\right\|_0$ by the respective infima can be found in
\cite[Theorem~2]{Boffi_Fortin_oper_disc_comp_EEs_00}. The remaining
bounds follow from \eqref{Nedelec_interpol_EE} by taking $\tvh:=\IN
\tu\in\NhO$, from \eqref{Raviart--Thomas_interpol_EE} by taking
$\ttauh:=\IR\tsg\in\tMh$ (cf. \eqref{div_0-div_Rav_0}), from
Corollary~\ref{proper_u_sg} and from the normalization constraint
$\left\|\tsg\right\|_0=1$.
\end{proof}

Our next step is to define the standard $\tLtwo$-orthogonal projector
\begin{equation*}
\tP_h:\;\tM\longrightarrow\tMh
\end{equation*}
and establish its approximation properties.

\begin{lemma}
\label{estim_b-P_h_b}
For all $\ttau\in\tM\cap\tHt$,
\begin{equation*}
\left\|\ttau-\tP_h\ttau\right\|_0
\leq Ch^t\left\|\ttau\right\|_t.
\end{equation*}
\end{lemma}

\begin{proof}
Since $\ttau\in\tM=\HOdivO$, according to
\cite[Theorem~3.17]{Amrouche_Berna_Dauge_Girault_Vect_poten_3D_nonsmooth_dom_98},
there exists $\tv\in\HOcurl\cap\HdivO$ such that $\ttau=\curl\tv$ and
$\left\|\tv\right\|_{\curl}\leq C\left\|\ttau\right\|_0$ (see
\cite[Corollary~3.19]{Amrouche_Berna_Dauge_Girault_Vect_poten_3D_nonsmooth_dom_98}).
Since $\HOcurl\cap\HdivO\hookrightarrow\tHt$ (cf.
Theorem~\ref{embed_HOcurl_Hdiv in_tHt}), $\tv\in\tHt$ and
$\left\|\tv\right\|_t\leq C\left\|\tv\right\|_{\curl}
\leq C\left\|\ttau\right\|_0$. Moreover, since $\curl\tv=\ttau\in\tHt$,
we have that $\tv\in\Htcurl$ with
$\left\|\tv\right\|_t+\left\|\curl\tv\right\|_t
\leq C\left\|\ttau\right\|_t$. On the other hand, since $\tP_h$ is the
$\tLtwo$-orthogonal projector onto $\tMh$ and $\curl(\IN\tv)\in\tMh$,
\begin{equation*}
\left\|\ttau-\tP_h\ttau\right\|_0
\leq\left\|\curl\tv-\curl(\IN\tv)\right\|_0
\leq Ch^t\left(\left\|\tv\right\|_t
+\left\|\curl\tv\right\|_t\right)
\leq Ch^t\left\|\ttau\right\|_t,
\end{equation*}
where we have used \eqref{Nedelec_interpol_EE}.
\end{proof}

In the forthcoming analysis we will also use the mixed finite element
approximation $(\tuhh,\tsghh)\in\NhO\times\tMh$ of an eigenfunction
$\left(\tu,\tsg\right)$ of Problem~\ref{Mixed_Formul} defined by
\begin{subequations}
\label{def_proj_oper}
\begin{alignat}{3}
& \left(\tuhh,\tvh\right)
-\left(\curl\tvh,\tsghh\right)=0
&& \qquad\forall\tvh\in\NhO,
\label{def_proj_oper_1}
\\
& -\left(\curl\tuhh,\ttauh\right)
=-\lambda\left(\tsg,\ttauh\right)
&& \qquad\forall\ttauh\in\tMh.
\label{def_proj_oper_2}
\end{alignat}
\end{subequations}
Notice that $\tuhh=\tSh(\lambda\tsg)$ and $\tsghh=\tTh(\lambda\tsg)$,
whereas $\tu=\tS(\lambda\tsg)$ and $\tsg=\tT(\lambda\tsg)$. Hence, it
follows from Lemma~\ref{conv_prop_Th_Sh_g_regul} that
\begin{equation}
\label{proj_oper_ee}
\left\|\tu-\tuhh\right\|_{\curl}
+\left\|\tsg-\tsghh\right\|_0
\leq Ch^t\left\|\tsg\right\|_t
\leq Ch^t\left\|\tsg\right\|_0,
\end{equation}
the last inequality because of Corollary~\ref{proper_u_sg}.

Our next step is to prove a superconvergence approximation property
between $\tsghh$ and $\tP_h\tsg$.

\begin{lemma}
\label{lem_superconv_tsghh_Phsg}
There holds
\begin{equation*}
\left\|\tsghh-\tP_h\tsg\right\|_0\leq Ch^{2t}.
\end{equation*}
\end{lemma}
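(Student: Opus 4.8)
The quantity $\tsghh-\tP_h\tsg$ lies in $\tMh$, so I would estimate it by testing against itself. The key is to find an equation satisfied by the difference. From the definition \eqref{def_proj_oper_2} of $\tsghh=\tTh(\lambda\tsg)$ and the fact (Corollary~\ref{proper_u_sg}) that $\curl\tu=\lambda\tsg$, the second equation reads $\left(\curl\tuhh,\ttauh\right)=\left(\curl\tu,\ttauh\right)$ for all $\ttauh\in\tMh$; since $\tP_h$ is the $\tLtwo$-projector onto $\tMh$ and $\curl\tuhh\in\tMh$, this gives $\curl\tuhh=\tP_h(\curl\tu)=\lambda\tP_h\tsg$. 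Hence
\begin{equation*}
\left(\tsghh-\tP_h\tsg,\ttauh\right)
=\frac1\lambda\left(\lambda\tsghh-\curl\tuhh,\ttauh\right)
=\frac1\lambda\left(\lambda(\tsghh-\tsg)+\curl\tu-\curl\tuhh,\ttauh\right)
\qquad\forall\ttauh\in\tMh.
\end{equation*}

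**Main step.** The plan is to rewrite the right-hand side using a duality/Aubin--Nitsche argument so as to gain an extra power of $h^t$. Take $\ttauh:=\tsghh-\tP_h\tsg\in\tMh$ and introduce the auxiliary solution $(\tS\ttauh,\tT\ttauh)$ of \eqref{def_T_S_eq} together with its discrete counterpart $(\tSh\ttauh,\tTh\ttauh)$. Using \eqref{def_T_S_1}–\eqref{def_T_S_2} and \eqref{def_Th_Sh_1}–\eqref{def_Th_Sh_2} one expresses $\left(\tsghh-\tP_h\tsg,\ttauh\right)$ in terms of the errors $\tu-\tuhh$, $\tsg-\tsghh$ paired against $(\tS-\tSh)\ttauh$, $(\tT-\tTh)\ttauh$. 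Since $\ttauh\in\tMh$, Lemma~\ref{conv_prop_Th_Sh_g_disc} gives $\left\|(\tS-\tSh)\ttauh\right\|_{\curl}+\left\|(\tT-\tTh)\ttauh\right\|_0\leq Ch^t\left\|\ttauh\right\|_0$, while \eqref{proj_oper_ee} gives $\left\|\tu-\tuhh\right\|_{\curl}+\left\|\tsg-\tsghh\right\|_0\leq Ch^t$. Multiplying these two $O(h^t)$ bounds yields $\left\|\tsghh-\tP_h\tsg\right\|_0^2\leq Ch^{2t}\left\|\tsghh-\tP_h\tsg\right\|_0$, whence the claim after dividing. Care is needed with the $\lambda$ versus $\lambda_h$ bookkeeping, but here only the fixed $\lambda$ appears since $(\tuhh,\tsghh)$ is defined with $\lambda$, not $\lambda_h$.

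**Main obstacle.** The delicate point is organizing the algebra so that \emph{both} factors in the final product genuinely carry $h^t$: one must avoid any term in which the full error $\tu-\tuhh$ is paired with something merely bounded (which would give only $h^t$, not $h^{2t}$). The mechanism that makes this work is the Galerkin-orthogonality-type identities: $\tuhh$ and $\tsghh$ solve \eqref{def_proj_oper} exactly, $(\tSh\ttauh,\tTh\ttauh)$ solves \eqref{def_Th_Sh_eq} exactly, and subtracting the discrete from the continuous mixed systems lets one insert the interpolants and convert every term into a product of a continuous-approximation error and a discrete-operator error. A secondary subtlety is checking that $\curl\tuhh=\lambda\tP_h\tsg$ (used above), which follows because $\curl(\NhO)=\tMh$ so that $\curl\tuhh\in\tMh$ and the $\tLtwo$-projection is exactly what \eqref{def_proj_oper_2} enforces. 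Once these identifications are in place, the estimate is a direct consequence of Lemmas~\ref{conv_prop_Th_Sh_g_disc} and the bound \eqref{proj_oper_ee}.
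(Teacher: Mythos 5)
Your proposal is correct and takes essentially the same route as the paper: test with the residual $\tsghh-\tP_h\tsg\in\tMh$, introduce the continuous and discrete auxiliary solutions $\tS$, $\tT$, $\tSh$, $\tTh$ applied to it, use the exactness of \eqref{def_proj_oper} and of the auxiliary discrete problem to cancel the dangerous cross term, and multiply the two $O(h^t)$ bounds from Lemma~\ref{conv_prop_Th_Sh_g_disc} and \eqref{proj_oper_ee}. The only differences are cosmetic: the paper works with the normalized residual $\trh$ instead of dividing at the end, and your preliminary identity $\curl\tuhh=\lambda\tP_h\tsg$, though correct, is not needed, since the projection property is invoked only to replace $\tP_h\tsg$ by $\tsg$ when paired with $\curl(\tSh\trh)\in\tMh$.
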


\begin{proof}
Let us set $\trh:=\left(\tsghh-\tP_h\tsg\right)
/\left\|\tsghh-\tP_h\tsg\right\|_0\in\tMh$. Let $\tut:=\tS\trh$ and
$\tsgt:=\tT\trh$, so that $(\tut,\tsgt)\in\HOcurl\times\tM$ and
\begin{subequations}
\label{def_utilde_sgtilde_eq}
\begin{alignat}{3}
& \left(\tut,\tv\right)
-\left(\curl\tv,\tsgt\right)=0
&& \qquad\forall\tv\in\HOcurl,
\label{def_utilde_sgtilde_1}
\\
& -\left(\curl\tut,\ttau\right)
=-\left(\trh,\ttau\right)
&& \qquad\forall\ttau\in\tM.
\label{def_utilde_sgtilde_2}
\end{alignat}
\end{subequations}
Also, let $\tuht:=\tSh\trh$ and $\tsght:=\tTh\trh$, so that
$(\tuht,\tsght)\in\NhO\times\tMh$ and
\begin{subequations}
\label{def_utildeh_sgtildeh_eq}
\begin{alignat}{3}
& \left(\tuht,\tvh\right)
-\left(\curl\tvh,\tsght\right)=0
&& \qquad\forall\tvh\in\NhO,
\label{def_utildeh_sgtildeh_1}
\\
& -\left(\curl\tuht,\ttauh\right)
=-\left(\trh,\ttauh\right)
&& \qquad\forall\ttauh\in\tMh.
\label{def_utildeh_sgtildeh_2}
\end{alignat}
\end{subequations}
Then, from Lemma~\ref{conv_prop_Th_Sh_g_disc}, we have that
\begin{equation}
\label{aux_probl_ee}
\left\|\tut-\tuht\right\|_{\curl}
+\left\|\tsgt-\tsght\right\|_0
\leq Ch^t\left\|\trh\right\|_0
\leq Ch^t.
\end{equation}

Now, by using the definition of $\trh$, taking
$\ttauh:=\tsghh-\tP_h\tsg$ in \eqref{def_utildeh_sgtildeh_2}, and using
the fact that $\tP_h$ is the $\tLtwo$-orthogonal projection onto $\tMh$,
we write
\begin{equation*}
\left\|\tsghh-\tP_h\tsg\right\|_0
=\left(\tsghh-\tP_h\tsg,\trh\right)
=\left(\curl\tuht,\tsghh-\tP_h\tsg\right)
=\left(\curl\tuht,\tsghh-\tsg\right).
\end{equation*}
Taking $\tvh:=\tuht$ in \eqref{def_proj_oper_1} and $\tv:=\tuht$ in
\eqref{mixed_formul_1} and adding and subtracting
$\left(\tuhh-\tu,\tut\right)$ yield
\begin{equation*}
\left(\curl\tuht,\tsghh-\tsg\right)
=\left(\tuhh-\tu,\tuht-\tut\right)
+\left(\tuhh-\tu,\tut\right).
\end{equation*}
Further, taking $\tv:=\tuhh-\tu$ in \eqref{def_utilde_sgtilde_1} and
adding and subtracting $\left(\tsght,\curl(\tuhh-\tu)\right)$ lead to
\begin{equation*}
\left(\tuhh-\tu,\tut\right)
=\left(\tsgt-\tsght,\curl(\tuhh-\tu)\right)
+\left(\tsght,\curl(\tuhh-\tu)\right).
\end{equation*}
Moreover, by using $\ttauh:=\tsght$ in \eqref{def_proj_oper_2} and
$\ttau:=\tsght$ in \eqref{mixed_formul_2}, we obtain 
\begin{equation*}
\left(\tsght,\curl(\tuhh-\tu)\right)=0. 
\end{equation*}
Therefore, from all these equations we derive
\begin{equation*}
\left\|\tsghh-\tP_h\tsg\right\|_0
=\left(\tuhh-\tu,\tuht-\tut\right)
+\left(\tsgt-\tsght,\curl(\tuhh-\tu)\right).
\end{equation*}
Thus, we conclude the proof by combining the equation above, the error
estimates \eqref{proj_oper_ee} and \eqref{aux_probl_ee} and the
normalization constraint $\left\|\tsg\right\|_0=1$.
\end{proof}

Now, we prove a superconvergence approximation property between $\tsghh$
and $\tsgh$.

\begin{lemma}
\label{thm_superconv_tsghh_tsgh}
If $h$ is small enough, then
\begin{equation*}
\left\|\tsghh-\tsgh\right\|_0
\leq Ch^{2t}.
\end{equation*}
\end{lemma}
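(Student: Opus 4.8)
## Proof Proposal

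The plan is to prove $\left\|\tsghh-\tsgh\right\|_0\leq Ch^{2t}$ by recognizing that both $\tsghh$ and $\tsgh$ are discrete eigenfunction-like objects: $\tsghh=\tTh(\lambda\tsg)$ solves the source problem \eqref{def_proj_oper} with the \emph{exact} eigenpair on the right-hand side, while $\tsgh$ solves the discrete eigenvalue problem \eqref{mixed_disc}, which by \eqref{curl_uh-lambh_sgh} and \eqref{def_Th_Sh_eq} means $\tsgh=\tTh(\lambda_h\tsgh)$. Thus the difference $\tsghh-\tsgh=\tTh(\lambda\tsg)-\tTh(\lambda_h\tsgh)$, and by linearity of $\tTh$ I can split this as
\begin{equation*}
\tsghh-\tsgh
=\tTh(\lambda\tsg-\lambda\tsgh)
+\tTh(\lambda\tsgh-\lambda_h\tsgh)
=\lambda\,\tTh(\tsg-\tsgh)
+(\lambda-\lambda_h)\,\tTh(\tsgh).
\end{equation*}
The second term is immediately controlled: $\left\|\tTh\right\|$ is bounded uniformly in $h$ (Lemma~\ref{well_posed_Th_Sh}), $\left\|\tsgh\right\|_0=1$ by normalization, and $\left|\lambda-\lambda_h\right|\leq Ch^{2t}$ by \eqref{a priori estim_lambda_h}; hence $\left\|(\lambda-\lambda_h)\tTh(\tsgh)\right\|_0\leq Ch^{2t}$.

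The main work is therefore to bound $\left\|\tTh(\tsg-\tsgh)\right\|_0$ by $Ch^{2t}$, i.e. to gain a full extra power $h^t$ beyond the naive estimate $\left\|\tTh(\tsg-\tsgh)\right\|_0\leq C\left\|\tsg-\tsgh\right\|_0\leq Ch^t$ from \eqref{a priori estim_tsgh}. The idea is to insert $\tP_h$: write $\tsg-\tsgh=(\tsg-\tP_h\tsg)+(\tP_h\tsg-\tsghh)+(\tsghh-\tsgh)$. The middle piece $\tP_h\tsg-\tsghh$ already has the desired order $Ch^{2t}$ by Lemma~\ref{lem_superconv_tsghh_Phsg}, and applying the bounded operator $\tTh$ keeps it at $Ch^{2t}$. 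The first piece $\tsg-\tP_h\tsg$ is orthogonal to $\tMh$, so for $\ttauh\in\tMh$ one has $\left(\tsg-\tP_h\tsg,\ttauh\right)=0$; since $\tTh(\tsg-\tP_h\tsg)\in\tMh$ and the definition \eqref{def_Th_Sh_eq} of $\tTh$ tests against $\tMh$-functions only through the $\tLtwo$-inner product with $\tg$ on the right-hand side, the orthogonality should force $\tTh(\tsg-\tP_h\tsg)=\tO$ (this needs a short verification: take $\tg=\tsg-\tP_h\tsg$ in \eqref{def_Th_Sh_eq}, test \eqref{def_Th_Sh_2} with $\ttauh=\tSh\tg$—actually with the appropriate combination—and use $\left(\tg,\ttauh\right)=0$ to conclude $\curl(\tSh\tg)=\tO$ hence $\tTh\tg=\tO$). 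Collecting terms,
\begin{equation*}
\left\|\tTh(\tsg-\tsgh)\right\|_0
\leq \left\|\tTh(\tP_h\tsg-\tsghh)\right\|_0
+\left\|\tTh(\tsghh-\tsgh)\right\|_0
\leq Ch^{2t}+C\left\|\tsghh-\tsgh\right\|_0.
\end{equation*}

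Substituting back into the splitting of $\tsghh-\tsgh$ gives
\begin{equation*}
\left\|\tsghh-\tsgh\right\|_0
\leq \lambda C h^{2t}
+\lambda C\left\|\tsghh-\tsgh\right\|_0
\cdot Ch^{?}\ \text{---}
\end{equation*}
here one must be careful: the term $C\left\|\tsghh-\tsgh\right\|_0$ coming from $\tTh(\tsghh-\tsgh)$ is \emph{not} small, so I cannot absorb it naively. The fix is to observe that $\tTh(\tsghh-\tsgh)$ is itself amenable to the same treatment, or—cleaner—to go back one step: rather than splitting $\tsg-\tsgh$ all the way down to $\tsghh-\tsgh$, split only as $\tsg-\tsgh=(\tsg-\tP_h\tsg)+(\tP_h\tsg-\tsgh)$ and bound $\left\|\tP_h\tsg-\tsgh\right\|_0$. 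For this last quantity I would use that $\tsgh=\tTh(\lambda_h\tsgh)$ and $\tP_h\tsg$ is close to $\tsghh=\tTh(\lambda\tsg)$, reducing again to the eigenvalue error $\left|\lambda-\lambda_h\right|$ and Lemma~\ref{lem_superconv_tsghh_Phsg}, closing the estimate without a circular term. The hard part is precisely this bookkeeping—arranging the telescoping so that every remainder either carries the factor $h^{2t}$ outright (via Lemma~\ref{lem_superconv_tsghh_Phsg} and \eqref{a priori estim_lambda_h}) or is killed by $\tLtwo$-orthogonality against $\tMh$, while the only occurrence of $\left\|\tsghh-\tsgh\right\|_0$ on the right carries a genuinely small prefactor (some positive power of $h$, e.g. from Lemma~\ref{conv_prop_Th_Sh_g_disc} applied to a difference that is itself $O(h^t)$) so that it can be absorbed into the left-hand side for $h$ small enough.
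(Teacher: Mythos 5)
Your setup is sound as far as it goes: the relations $\tsghh=\tTh(\lambda\tsg)$ and $\tsgh=\tTh(\lambda_h\tsgh)$, the bound $\left\|(\lambda-\lambda_h)\tTh\tsgh\right\|_0\le Ch^{2t}$ from \eqref{a priori estim_lambda_h} and Lemma~\ref{well_posed_Th_Sh}, the observation $\tTh(\tsg-\tP_h\tsg)=\tO$ (the right-hand side of \eqref{def_Th_Sh_eq} vanishes identically, so uniqueness gives the claim), and the use of Lemma~\ref{lem_superconv_tsghh_Phsg} for $\tTh(\tP_h\tsg-\tsghh)$ are all ingredients that the paper also uses. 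But the argument does not close, and you concede this yourself: what remains is
\begin{equation*}
\left\|\tsghh-\tsgh\right\|_0
\le Ch^{2t}+C\lambda\left\|\tTh(\tsghh-\tsgh)\right\|_0
\le Ch^{2t}+C\lambda\left\|\tsghh-\tsgh\right\|_0,
\end{equation*}
and the prefactor $C\lambda$ of the self-referential term is of order one, not $o(1)$. Your proposed fix (split only as $(\tsg-\tP_h\tsg)+(\tP_h\tsg-\tsgh)$) merely renames the problem, since $\left\|\tP_h\tsg-\tsgh\right\|_0\le Ch^{2t}+\left\|\tsghh-\tsgh\right\|_0$ is again the quantity to be estimated. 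More fundamentally, no bookkeeping with $\tP_h$ can produce the small prefactor you hope for: since $\lambda_h\tTh\tsgh=\tsgh$, the operator $\lambda\tTh$ acts essentially as the identity along the discrete eigendirection, so any positive power of $h$ in front of $\left\|\tsghh-\tsgh\right\|_0$ can only come from the \emph{difference} $\tTh-\tT$ (this is where Lemma~\ref{conv_prop_Th_Sh_g_disc} would enter), and your identity, which is written entirely in terms of $\tTh$, never produces that difference.

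The missing idea is the spectral separation the paper relies on. Decompose $\tsghh-\tsgh=\tdelta_h+\left(\tsghh-\tsgh,\tsg\right)\tsg$ with $\tdelta_h\in\tsg^{\perp_{\tM}}$. Because $\lambda$ is simple and $\tT$ is self-adjoint, $\tI-\lambda\tT$ is boundedly invertible on $\tsg^{\perp_{\tM}}$, so $\left\|\tdelta_h\right\|_0\le C\left\|\left(\tI-\lambda\tT\right)(\tsghh-\tsgh)\right\|_0$; expanding this with $\lambda\tT\tsg=\tsg$, $\lambda_h\tTh\tsgh=\tsgh$, $\lambda\tTh\tsg=\tsghh$, the self-referential terms appear only with factors $\left|\lambda-\lambda_h\right|\le Ch^{2t}$ or $C\lambda h^t$ (the latter precisely via $\left\|\left(\lambda\tTh-\lambda\tT\right)(\tsghh-\tsgh)\right\|_0\le C\lambda h^t\left\|\tsghh-\tsgh\right\|_0$ from Lemma~\ref{conv_prop_Th_Sh_g_disc}), and can therefore be absorbed for $h$ small; the remaining terms are $O(h^{2t})$ by exactly the pieces you already assembled. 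The component along $\tsg$ then needs its own argument: $\left|\left(\tsg-\tsgh,\tsg\right)\right|=\frac12\left\|\tsg-\tsgh\right\|_0^2\le Ch^{2t}$ from the normalizations and \eqref{a priori estim_tsgh}, and $\left|\left(\tsghh-\tsg,\tsg\right)\right|\le Ch^{2t}$ by testing \eqref{mixed_formul} and \eqref{def_proj_oper} against each other together with \eqref{proj_oper_ee}. Without these two steps — the bounded inverse of $\tI-\lambda\tT$ on $\tsg^{\perp_{\tM}}$ and the quadratic control of the $\tsg$-component — your approach stalls at $O(h^t)$.
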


\begin{proof}
The proof we provide follows that of
\cite[Theorem~3.2]{Lin_Xie_superconv_MFEAs_eig_prob_12}. Let us first
state some relations that follow from \eqref{def_T_S_eq},
\eqref{def_Th_Sh_eq}, and \eqref{def_proj_oper}:
\begin{equation*}
\lambda\tT\tsg=\tsg,
\qquad
\lambda_h\tTh\tsgh=\tsgh
\qquad\text{and}\qquad
\lambda\tTh\tsg=\tsghh.
\end{equation*}
According to this, the following equalities hold:
\begin{align}
\left(\tI-\lambda\tT\right)\left(\tsghh-\tsgh\right) 
& =\left(\lambda_h\tTh-\lambda\tT\right)\left(\tsghh-\tsgh\right)
+\tsghh-\tsgh
-\lambda_h\tTh(\tsghh-\tsg)
-\lambda_h\tTh(\tsg-\tsgh)
\notag
\\
& =\left(\lambda_h\tTh-\lambda\tT\right)\left(\tsghh-\tsgh\right)
+\left(\lambda-\lambda_h\right)\tTh\tsg
-\lambda_h\tTh(\tsghh-\tsg).
\label{I-lamT_eq}
\end{align}

Let us set $\tdelta_h:=\tsghh-\tsgh-\left(\tsghh-\tsgh,\tsg\right)\tsg$.
Due to normalization ($\left\|\tsg\right\|_0=1$) there holds
$\left(\tdelta_h,\tsg\right)=0$. Because of the fact that $\lambda$ is a
simple eigenvalue, its eigenspace is spanned by $\tsg$. Since
$\tT:\;\tM\longrightarrow\tM$ is self-adjoint, the orthogonal complement
of $\tsg$ is an invariant subspace for $\tT$ and $\lambda$ does not
belong to the spectrum of $\tT|_{\tsg^{\perp_{\tM}}}
:\;\tsg^{\perp_{\tM}}\longrightarrow\tsg^{\perp_{\tM}}$. Therefore,
$\left(\tI-\lambda\tT\right):
\;\tsg^{\perp_{\tM}}\longrightarrow\tsg^{\perp_{\tM}}$ is invertible and
its inverse is bounded. Consequently, since
$\tdelta_h\in\tsg^{\perp_{\tM}}$, there exists $C>0$ such that
$\left\|\tdelta_h\right\|_0
\leq C\left\|\left(\tI-\lambda\tT\right)\tdelta_h\right\|_0$. Moreover,
since $\left(\tI-\lambda\tT\right)\tsg=\tO$, we have that
$\left\|\tdelta_h\right\|_0 
\leq C\left\|\left(\tI-\lambda\tT\right)(\tsghh-\tsgh)\right\|_0$. Then,
by using \eqref{I-lamT_eq}, we arrive at
\begin{align}
\left\|\tdelta_h\right\|_0 
& \leq C\left(\left\|\left(\lambda_h\tTh-\lambda\tTh\right)
\left(\tsghh-\tsgh\right)\right\|_0 
+\left\|\left(\lambda\tTh-\lambda\tT\right)
\left(\tsghh-\tsgh\right)\right\|_0\right.
\notag
\\
& \hphantom{\leq C\big(}
\left.+\left|\lambda-\lambda_h\right|\left\|\tTh\tsg\right\|_0 
+\lambda_h\left\|\tTh(\tsghh-\tsg)\right\|_0\right).
\label{deltah_estim_1}
\end{align}

We will estimate all terms on the right-hand side above in the same way
as in \cite[Theorem~3.2.(3.27)]{Lin_Xie_superconv_MFEAs_eig_prob_12},
except for the last one. With the aid of \eqref{a priori estim_lambda_h}
and using the facts that $\left\|\tTh\right\|_0\leq C$ (cf.
Lemma~\ref{well_posed_Th_Sh}) and $\left\|\tsg\right\|_0=1$, we have
\begin{equation}
\label{deltah_estim_2a}
\left\|\left(\lambda_h\tTh-\lambda\tTh\right)
\left(\tsghh-\tsgh\right)\right\|_0
\leq Ch^{2t}\left\|\tTh(\tsghh-\tsgh)\right\|_0
\leq Ch^{2t}\left\|\tsghh-\tsgh\right\|_0,
\end{equation}
and 
\begin{equation}
\left|\lambda-\lambda_h\right|
\left\|\tTh\tsg\right\|_0
\leq Ch^{2t},
\end{equation}
whereas from Lemma~\ref{conv_prop_Th_Sh_g_disc} with $\tg:=\tsghh-\tsgh$
we derive
\begin{equation}
\label{deltah_estim_2b}
\left\|\left(\lambda\tTh-\lambda\tT\right)
\left(\tsghh-\tsgh\right)\right\|_0
\leq C\lambda h^t\left\|\tT(\tsghh-\tsgh)\right\|_t
\leq C\lambda h^t\left\|\tsghh-\tsgh\right\|_0.
\end{equation}
The last term in \eqref{deltah_estim_1} can be handled as follows. We
add and subtract $\tTh(\tP_h\tsg)$ and obtain
\begin{equation*}
\lambda_h\left\|\tTh(\tsghh-\tsg)\right\|_0
\leq\lambda_h\left\|\tTh(\tsghh-\tP_h\tsg)\right\|_0
+\lambda_h\left\|\tTh(\tP_h\tsg-\tsg)\right\|_0.
\end{equation*}
For the first term on the right-hand side above,
Lemma~\ref{lem_superconv_tsghh_Phsg} leads to 
\begin{equation*}
\lambda_h\left\|\tTh(\tsghh-\tP_h\tsg)\right\|_0\leq Ch^{2t}.
\end{equation*}
On the other hand, to evaluate the last term we use the definition of
$\tTh$ and observe that $\tTh(\tP_h\tsg-\tsg)=0$, because the right-hand
side of \eqref{def_Th_Sh_eq} vanishes for $g=\tP_h\tsg-\tsg$. Therefore,
we have proved that 
\begin{equation}
\label{deltah_estim_3}
\lambda_h\left\|\tTh(\tsghh-\tsg)\right\|_0
\leq Ch^{2t}.
\end{equation}

Now, by using the definition of $\tdelta_h$, we have that
\begin{align}
\label{tsghh_tsgh_estim_1}
\left\|\tsghh-\tsgh\right\|_0
\leq\left\|\tdelta_h\right\|_0
+\left\|\left(\tsghh-\tsgh,\tsg\right)\tsg\right\|_0.
\end{align}
Thus, there remains to estimate
\begin{equation}
\label{tsghh-tsgh_estim_1b}
\left\|\left(\tsghh-\tsgh,\tsg\right)\tsg\right\|_0
=\left|\left(\tsghh-\tsgh,\tsg\right)\right|
\leq\left|\left(\tsghh-\tsg,\tsg\right)\right|
+\left|\left(\tsg-\tsgh,\tsg\right)\right|.
\end{equation}
Since $\left\|\tsg\right\|_0=\left\|\tsgh\right\|_0=1$, by using
\eqref{a priori estim_tsgh} we have that
\begin{align}
\label{tsghh_tsgh_estim_2}
\left|\left(\tsg-\tsgh,\tsg\right)\right|
=\frac{1}{2}\left\|\tsg-\tsgh\right\|^2_0
\leq Ch^{2t}
\end{align}
and we are left with the estimation of
$\left|\left(\tsghh-\tsg,\tsg\right)\right|$. By taking
$\tq:=\tsghh-\tsg$ as a test function in \eqref{mixed_formul_2} and
$\ttau:=\tuhh-\tu$ in \eqref{mixed_formul_1}, we write
\begin{align}
\label{tsghh_tsgh_estim_3}
\lambda\left(\tsg,\tsghh-\tsg\right)
=\left(\curl\tu,\tsghh-\tsg\right)
+\left(\curl(\tuhh-\tu),\tsg\right)
-\left(\tu,\tuhh-\tu\right).
\end{align}
Furthermore, by using $\tuhh$ as test function in \eqref{mixed_formul_1}
and \eqref{def_proj_oper_1}, we have that
$(\tuhh,\tuhh-\tu)-(\curl\tuhh,\tsghh-\tsg)=0$, whereas, by taking
$\tsghh$ as test function in \eqref{mixed_formul_2} and
\eqref{def_proj_oper_2}, we have that
$\left(\curl(\tuhh-\tu),\tsghh\right)=0$. Thus, from the last three
equations and making use of the error estimate~\eqref{proj_oper_ee}, we
arrive at
\begin{equation}
\label{tsghh_tsgh_estim_4}
\lambda\left(\tsg,\tsghh-\tsg\right)
=\left(\curl(\tu-\tuhh),\tsghh-\tsg\right)
+\left(\curl(\tuhh-\tu),\tsg-\tsghh\right)
-\left(\tu-\tuhh,\tuhh-\tu\right)
\leq Ch^{2t}.
\end{equation}
Finally, putting together \eqref{tsghh_tsgh_estim_1},
\eqref{deltah_estim_1}--\eqref{deltah_estim_3}, and
\eqref{tsghh-tsgh_estim_1b}--\eqref{tsghh_tsgh_estim_4} leads to
\begin{equation*}
\left\|\tsghh-\tsgh\right\|_0
\leq C\left(h^{2t}
+\lambda h^t\left\|\tsghh-\tsgh\right\|_0\right).
\end{equation*}
Therefore, for $h$ small enough we conclude that
\begin{equation*}
\left\|\tsghh-\tsgh\right\|_0
\leq Ch^{2t}
\end{equation*}
and we end the proof.
\end{proof}

Now we are in a position to derive as an immediate consequence of
Lemmas~\ref{lem_superconv_tsghh_Phsg} and
\ref{thm_superconv_tsghh_tsgh}, the superconvergence result that will be
used in the following section.

\begin{corollary}
\label{cor_superconv_Phsg_tsgh}
For $h$ small enough,
\begin{equation*}
\left\|\tP_h\tsg-\tsgh\right\|_0
\leq Ch^{2t}.
\end{equation*}
\end{corollary}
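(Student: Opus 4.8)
The plan is to obtain the bound by a single application of the triangle inequality, inserting the intermediate quantity $\tsghh$ (the mixed finite element approximation defined in \eqref{def_proj_oper}) between $\tP_h\tsg$ and $\tsgh$. Concretely, I would write
\begin{equation*}
\left\|\tP_h\tsg-\tsgh\right\|_0
\leq\left\|\tP_h\tsg-\tsghh\right\|_0
+\left\|\tsghh-\tsgh\right\|_0,
\end{equation*}
and then bound the two terms on the right-hand side separately.

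For the first term I would invoke Lemma~\ref{lem_superconv_tsghh_Phsg} directly, which gives $\left\|\tsghh-\tP_h\tsg\right\|_0\leq Ch^{2t}$ with no smallness assumption on $h$. For the second term I would invoke Lemma~\ref{thm_superconv_tsghh_tsgh}, which gives $\left\|\tsghh-\tsgh\right\|_0\leq Ch^{2t}$ provided $h$ is small enough; this is where the hypothesis ``$h$ small enough'' in the statement of the corollary comes from, and it is simply inherited. Adding the two estimates and absorbing the two constants into a single generic $C$ yields $\left\|\tP_h\tsg-\tsgh\right\|_0\leq Ch^{2t}$, which is the claim.

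There is essentially no obstacle here: the corollary is a formal consequence of the two preceding lemmas, and the only subtlety worth a word is making sure that all three objects $\tP_h\tsg$, $\tsghh$, and $\tsgh$ live in $\tMh$ (indeed $\tP_h\tsg\in\tMh$ by definition of the projector, $\tsghh=\tTh(\lambda\tsg)\in\tMh$, and $\tsgh\in\tMh$ as the discrete eigenfunction), so that the $\tLtwo$-norms on the right-hand side are exactly the quantities estimated in Lemmas~\ref{lem_superconv_tsghh_Phsg} and \ref{thm_superconv_tsghh_tsgh}. The normalization $\left\|\tsg\right\|_0=\left\|\tsgh\right\|_0=1$ fixed earlier is what makes the constants in those lemmas absolute, so nothing further needs to be tracked.
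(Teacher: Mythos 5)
Your proof is correct and is exactly the paper's argument: the corollary is stated there as an immediate consequence of Lemmas~\ref{lem_superconv_tsghh_Phsg} and \ref{thm_superconv_tsghh_tsgh}, obtained by the same triangle inequality through $\tsghh$, with the smallness of $h$ inherited from Lemma~\ref{thm_superconv_tsghh_tsgh}.
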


\section{A posteriori error estimate} 
\label{sec_AEE}
\setcounter{equation}{0}

In this section we derive an a posteriori error estimate in the
$\mathrm{L}^2$-norm of the error between the eigenfunction $\tu$ and its
approximation $\tuh$. With this end, we apply the {\em Helmholtz
decomposition} of the error as follows:
\begin{equation*}
\teh:=\tu-\tuh
=\nabla\alpha+\curl\tbeta,
\end{equation*}
where $\alpha\in\HOl$ is the solution of the following problem:
\begin{equation*}
\left(\nabla\alpha,\nabla\psi\right)
=\left(\teh,\nabla\psi\right)
\qquad\forall\psi\in\HOl.
\end{equation*}
Therefore, $\dv(\teh-\nabla\alpha)=0$ in $\Om$ and, hence, there exists
$\tbeta\in\Hcurl\cap\HOdivO$ such that $\curl\tbeta=\teh-\nabla\alpha$
(see
\cite[Theorem~3.12]{Amrouche_Berna_Dauge_Girault_Vect_poten_3D_nonsmooth_dom_98}).
Moreover, $\left\|\tbeta\right\|_{\curl} \leq
C\left\|\teh-\nabla\alpha\right\|_0\leq C\left\|\teh\right\|_0$ (see
\cite[Corollary~3.16]{Amrouche_Berna_Dauge_Girault_Vect_poten_3D_nonsmooth_dom_98}).
Using this decomposition, we  split the $\tLtwo$-norm of the error
$\teh$ into two terms,
\begin{equation*}
\left\|\teh\right\|_0^2
=\left(\teh,\nabla\alpha\right)
+\left(\teh,\curl\tbeta\right),
\end{equation*}
which will be estimated separately.

For each $K\in\Th$, we define the (local) error
indicator
\begin{equation*}
\eta_K^2:=h_K^2\left\|\dv\tuh\right\|^2_{0,K}
+\sum_{F\in\FhI:\ F\subset\pt K}
\frac{h_F}{4}\left\|{\jump{\tuh\cdot\tn_F}}_F\right\|^2_{0,F},
\end{equation*}
where $\FhI$ is the set of all tetrahedra faces lying in the interior
of $\Om$, $\tn_F$ is a unit vector normal to $F$ and ${\jump\cdot}_F$
denotes the jump across $F$. We also define the (global) error estimator
\begin{equation*}
\eta:=\left\{\sum_{K\in\Th}\eta_K^2\right\}^{\frac{1}{2}}.
\end{equation*}

\begin{lemma}
\label{AEE_nabla_alpha}
There holds
\begin{equation*}
\left(\teh,\nabla\alpha\right)
\leq C\eta\left\|\nabla\alpha\right\|_0.
\end{equation*}
\end{lemma}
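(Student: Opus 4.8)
The plan is to bound $\left(\teh,\nabla\alpha\right)$ by testing the discrete and continuous equations against $\nabla\alpha$ and its finite element approximation, then integrating by parts elementwise. First I would note that $\nabla\alpha\in\HOcurl$ with $\curl(\nabla\alpha)=\tO$, so taking $\tv:=\nabla\alpha$ in \eqref{mixed_formul_1} gives $\left(\tu,\nabla\alpha\right)=\left(\curl(\nabla\alpha),\tsg\right)=0$; consequently $\left(\teh,\nabla\alpha\right)=-\left(\tuh,\nabla\alpha\right)$. The next step is to subtract the discrete counterpart: for any $\tvh\in\NhO$ with $\curl\tvh=\tO$ we have $\left(\tuh,\tvh\right)=0$ from \eqref{mixed_disc_1} (since $\curl\tvh\in\tMh$), so $\left(\teh,\nabla\alpha\right)=-\left(\tuh,\nabla\alpha-\tvh\right)$. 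A natural choice is $\tvh:=\nabla(\IC\alpha)\in\nabla(\LhO)\subset\NhO$, where $\IC$ is a Clément/Scott--Zhang type quasi-interpolant onto $\LhO$, which indeed has zero curl. Thus $\left(\teh,\nabla\alpha\right)=-\left(\tuh,\nabla(\alpha-\IC\alpha)\right)$.

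Then I would integrate by parts elementwise. Writing $\psi:=\alpha-\IC\alpha\in\HOl$ and summing over $K\in\Th$,
\begin{equation*}
-\left(\tuh,\nabla\psi\right)
=\sum_{K\in\Th}\left(\dv\tuh,\psi\right)_K
-\sum_{F\in\FhI}\left(\jump{\tuh\cdot\tn_F}_F,\psi\right)_F,
\end{equation*}
where the boundary faces drop out because $\psi=0$ on $\DOm$, and the interior jump terms arise from collecting contributions of adjacent elements. From here the argument is the standard residual estimate: apply Cauchy--Schwarz on each element and face, then invoke the interpolation estimates for $\IC$, namely $\left\|\alpha-\IC\alpha\right\|_{0,K}\leq Ch_K\left\|\nabla\alpha\right\|_{0,\omega_K}$ and $\left\|\alpha-\IC\alpha\right\|_{0,F}\leq Ch_F^{1/2}\left\|\nabla\alpha\right\|_{0,\omega_F}$ on suitable patches. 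This yields
\begin{equation*}
\left(\teh,\nabla\alpha\right)
\leq C\left(\sum_{K\in\Th}h_K^2\left\|\dv\tuh\right\|_{0,K}^2
+\sum_{F\in\FhI}h_F\left\|\jump{\tuh\cdot\tn_F}_F\right\|_{0,F}^2\right)^{1/2}
\left\|\nabla\alpha\right\|_0,
\end{equation*}
and recognizing the bracket as $\eta^2$ (up to the harmless constant $1/4$ absorbed into $C$) finishes the proof.

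The main obstacle is not conceptual but a matter of care: one must choose $\tvh$ inside $\NhO$ with vanishing curl so that the discrete equation \eqref{mixed_disc_1} can be used to subtract it off, and $\nabla(\LhO)$ is exactly the discrete kernel, so a gradient of a scalar quasi-interpolant is the right object. The only technical point requiring attention is the local approximation property of $\IC$ on patches and the finite-overlap property of those patches (so the patch norms sum back to the global $\left\|\nabla\alpha\right\|_0$), which is classical for Clément-type operators on regular meshes. One should also double-check the bookkeeping of the jump sign and the face-orientation convention so that the jump term matches the definition of $\eta_K$; this is routine.
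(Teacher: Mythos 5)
Your proposal is correct and follows essentially the same route as the paper's proof: test \eqref{mixed_formul_1} with $\nabla\alpha$ and \eqref{mixed_disc_1} with $\nabla(\IC\alpha)\in\NhO$, reduce to $-\left(\tuh,\nabla(\alpha-\IC\alpha)\right)$, integrate by parts elementwise, and apply Cauchy--Schwarz with the Cl\'ement interpolation estimates. The only cosmetic difference is that you state the Cl\'ement bounds directly in terms of $\left\|\nabla\alpha\right\|_{0,\omega_K}$, whereas the paper uses $\left\|\alpha\right\|_{1,\omega_K}$ and then invokes Friedrich's inequality; both are standard and equivalent here.
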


\begin{proof}
Let $\IC:\HOl\rightarrow\LhO$ denote a Cl\'ement interpolant preserving
the vanishing values on the boundary (see
\cite{Clem_approx_FEFs_loc_regul_75}). Since $\IC\alpha\in\LhO$, it is
easy to check that $\nabla(\IC\alpha)\in\NhO$. Then, taking
$\ttauh:=\nabla(\IC\alpha)$ in \eqref{mixed_disc_1}, we have that
$\left(\tuh,\nabla(\IC\alpha)\right)=0$. Moreover, we have from
\eqref{mixed_formul_1} that $\left(\tu,\nabla\alpha\right)=0$, too.
Using these observations, Green's theorem and Cauchy--Schwarz
inequality, we write
\begin{align*}
\left(\teh,\nabla\alpha\right) 
& =-\left(\tuh,\nabla\alpha\right)
=-\left(\tuh,\nabla(\alpha-\IC\alpha)\right)
=\sum_{K\in\Th}\left\{\left(\dv\tuh,\alpha-\IC\alpha\right)_K
-\left(\tuh\cdot\tn_K,\alpha-\IC\alpha\right)_{\pt K}\right\}
\\
& \leq\sum_{K\in\Th}
\left\{\left\|\dv\tuh\right\|_K\left\|\alpha-\IC\alpha\right\|_K
+\sum_{F\in\FhI:\ F\subset\pt K}
\frac{1}{2}\left\|{\jump{\tuh\cdot\tn_F}}_F\right\|_F
\left\|\alpha-\IC\alpha\right\|_F\right\},
\end{align*}
where $\tn_K$ denotes the unit outer normal to $K$.

We recall the following approximation properties of the Cl\'ement
interpolant (see \cite{Clem_approx_FEFs_loc_regul_75}):
\begin{equation*}
\left\|\alpha-\IC\alpha\right\|_F
\leq Ch_F^{\frac12}\left\|\alpha\right\|_{1,\omega_F}
\qquad\text{and}\qquad
\left\|\alpha-\IC\alpha\right\|_K
\leq Ch_K\left\|\alpha\right\|_{1,\omega_K},
\end{equation*}
where 
$\omega_S:=\bigcup\left\{K'\in\Th:\ K'\cap S\neq\emptyset\right\}$, for
$S=F$ or $S=K$. Using these estimates, Cauchy--Schwarz inequality and
Friedrich's inequality, we obtain
\begin{equation*}
\left(\teh,\nabla\alpha\right)
\leq C\sum_{K\in\Th}\eta_K\left\|\alpha\right\|_{1,\omega_K}
\leq C\eta\left\|\alpha\right\|_{1}
\leq C\eta\left\|\nabla\alpha\right\|_0,
\end{equation*}
which allows us to conclude the proof.
\end{proof}

\begin{lemma}
\label{AEE_curl_beta}
There holds
\begin{equation*}
\left(\teh,\curl\tbeta\right)
\leq Ch^{2t}\left\|\teh\right\|_0.
\end{equation*}
\end{lemma}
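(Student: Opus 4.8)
The quantity $\left(\teh,\curl\tbeta\right)$ involves the exact Helmholtz component $\tbeta\in\Hcurl\cap\HOdivO$, which cannot be used directly in the discrete equations, so the plan is to transfer everything onto $\curl\tbeta$ via the identity $\curl\tbeta=\teh-\nabla\alpha$, and then to exploit orthogonality and the superconvergence result of Corollary~\ref{cor_superconv_Phsg_tsgh}. First I would observe that since $\curl\tbeta\in\HdivO$ and $\tu=\curl\tsg$, $\tuh$ satisfies \eqref{mixed_disc_1}, one can write $\left(\teh,\curl\tbeta\right)=\left(\tu-\tuh,\curl\tbeta\right)$ and use the continuous equation \eqref{mixed_formul_1} with $\tv:=\tbeta$ — but $\tbeta\notin\HOcurl$ in general, so instead I would use that $\curl\tbeta=\teh-\nabla\alpha$ has divergence zero and lies in $\tM=\HOdivO$ by construction, hence it makes sense to test. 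The cleanest route is: insert $\tP_h(\curl\tbeta)\in\tMh$, splitting
\begin{equation*}
\left(\teh,\curl\tbeta\right)
=\left(\teh,\curl\tbeta-\tP_h(\curl\tbeta)\right)
+\left(\teh,\tP_h(\curl\tbeta)\right).
\end{equation*}

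For the second term, I would use that $\tu$ satisfies \eqref{mixed_formul_2} and $\tuh$ satisfies \eqref{curl_uh-lambh_sgh}, together with the fact that $\tP_h(\curl\tbeta)\in\tMh$, to rewrite $\left(\teh,\tP_h(\curl\tbeta)\right)$ in terms of $\curl(\teh)=\curl\tu-\curl\tuh=\lambda\tsg-\lambda_h\tsgh$ — more precisely, via \eqref{mixed_formul_1} and \eqref{mixed_disc_1} one gets $\left(\tu,\ttauh\right)$ paired against a vector potential of $\ttauh$, and the difference reduces to $\left(\lambda\tsg-\lambda_h\tsgh,\ttauh\right)$-type terms after integrating by parts against $\curl$. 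The key point is that after this manipulation the second term is controlled by $\left\|\lambda\tsg-\lambda_h\tsgh\right\|$-type quantities paired with $\left\|\tP_h\tsg-\tsgh\right\|_0$ and $\left|\lambda-\lambda_h\right|$, both of which are $O(h^{2t})$ by Corollary~\ref{cor_superconv_Phsg_tsgh} and \eqref{a priori estim_lambda_h}; the factor $\left\|\teh\right\|_0$ (or $\left\|\tbeta\right\|_{\curl}\le C\left\|\teh\right\|_0$) comes out on the side.

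For the first term, $\left(\teh,\curl\tbeta-\tP_h(\curl\tbeta)\right)$, I would again use $\left(\tu,\curl\tbeta-\tP_h(\curl\tbeta)\right)=0$-type orthogonality for the $\tu$ part (since $\tu$ is divergence-free and the argument is a legitimate test object) reducing it to $-\left(\tuh,\curl\tbeta-\tP_h(\curl\tbeta)\right)$; then I would try to absorb this using that $\curl\tuh=\lambda_h\tsgh\in\tMh$ is orthogonal to $\curl\tbeta-\tP_h(\curl\tbeta)$ after an integration by parts, leaving only boundary/jump contributions — but more likely the clean statement is that the whole first term vanishes or is of higher order because $\curl\tbeta-\tP_h(\curl\tbeta)\perp\tMh$ and $\tuh$ itself is essentially in the right discrete space modulo its (small) divergence and normal jumps, which are exactly what the estimator $\eta$ controls; since the claim here is a pure $O(h^{2t})$ bound without $\eta$, the expectation is that this term is handled purely by orthogonality plus the approximation estimate $\left\|\curl\tbeta-\tP_h(\curl\tbeta)\right\|_0\le Ch^t\left\|\curl\tbeta\right\|_t$ from Lemma~\ref{estim_b-P_h_b} combined with a further $h^t$ gained from a duality/superconvergence argument on $\tuh$.

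**Main obstacle.** The hard part will be organizing the second term so that the pairing genuinely produces $\tP_h\tsg-\tsgh$ rather than $\tsg-\tsgh$: a naive estimate only gives $O(h^t)$, and one must carefully route through the auxiliary projection $(\tuhh,\tsghh)$ — using \eqref{def_proj_oper} and $\curl\tuhh=\lambda\tP_h\tsg$ (which follows from \eqref{def_proj_oper_2}) — so that the eigenfunction error splits as $\tsg-\tsgh=(\tsg-\tP_h\tsg)+(\tP_h\tsg-\tsgh)$, the first piece being orthogonal to $\tMh$ (hence killed when paired against $\tP_h(\curl\tbeta)\in\tMh$) and the second being $O(h^{2t})$ by Corollary~\ref{cor_superconv_Phsg_tsgh}. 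Keeping track of the factors of $\lambda$ and $\lambda_h$ and verifying that every object I test with is admissible (in $\HOcurl$, in $\tM$, or in $\tMh$ as appropriate) is where the bookkeeping is delicate; everything else is routine.
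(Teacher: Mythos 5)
Your plan of inserting the projection \emph{before} integrating by parts, i.e.\ splitting $\left(\teh,\curl\tbeta\right)=\left(\teh,\curl\tbeta-\tP_h(\curl\tbeta)\right)+\left(\teh,\tP_h(\curl\tbeta)\right)$, does not go through with the tools available, and it misses the one simple step that makes the paper's proof work. Since $\teh=\tu-\tuh\in\HOcurl$, Green's formula gives $\left(\teh,\curl\tbeta\right)=\left(\curl\teh,\tbeta\right)$ with no admissibility issue for $\tbeta$ at all (the vanishing tangential trace of $\teh$ kills the boundary term); then $\curl\teh=\lambda\tsg-\lambda_h\tsgh$ by \eqref{proper_u_sg_2} and \eqref{curl_uh-lambh_sgh}, and the paper splits $\left(\curl\teh,\tbeta\right)=\left(\left(\lambda-\lambda_h\right)\tsg,\tbeta\right)+\lambda_h\left(\tsg-\tP_h\tsg,\tbeta\right)+\lambda_h\left(\tP_h\tsg-\tsgh,\tbeta\right)$. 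The first and last terms are $O(h^{2t})\left\|\teh\right\|_0$ by \eqref{a priori estim_lambda_h} and Corollary~\ref{cor_superconv_Phsg_tsgh}; the middle one is handled by writing $\left(\tsg-\tP_h\tsg,\tbeta\right)=\left(\tsg-\tP_h\tsg,\tbeta-\tP_h\tbeta\right)$ and applying Lemma~\ref{estim_b-P_h_b} to \emph{both} factors, which is legitimate because $\tbeta\in\Hcurl\cap\HOdivO\subset\tM\cap\tHt$ with $\left\|\tbeta\right\|_t\le C\left\|\curl\tbeta\right\|_0\le C\left\|\teh\right\|_0$ (Theorem~\ref{embed_HOcurl_Hdiv in_tHt}). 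The decisive point, absent from your plan, is that the extra factor $h^t$ is extracted from the projection error of $\tbeta$ itself, the only object in sight that is both in $\tHt$ and controlled by $\left\|\teh\right\|_0$.

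Concretely, two steps of your proposal fail. First, in $\left(\teh,\curl\tbeta-\tP_h(\curl\tbeta)\right)$ the claimed orthogonality $\left(\tu,\curl\tbeta-\tP_h(\curl\tbeta)\right)=0$ is false ($\tu\in\tM$ but $\tu\notin\tMh$), and Lemma~\ref{estim_b-P_h_b} cannot be applied to $\ttau:=\curl\tbeta=\teh-\nabla\alpha$: this function contains the merely piecewise-polynomial $\tuh$, so it is not in $\tHt$, and in any case no bound of its $t$-norm by $\left\|\teh\right\|_0$ is available; what remains, $-\left(\tuh,\curl\tbeta-\tP_h(\curl\tbeta)\right)$, measures the $\tLtwo$-distance of $\tuh$ to $\tMh$, which is estimator-sized rather than $O(h^{2t})$, so this term cannot be dismissed ``purely by orthogonality''. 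Second, in $\left(\teh,\tP_h(\curl\tbeta)\right)$, to reach $\curl\teh=\lambda\tsg-\lambda_h\tsgh$ you must represent $\tP_h(\curl\tbeta)=\curl\tvh$ with $\tvh\in\NhO$ (Lemma~\ref{curl_Ned-RTO_HdivO}) and integrate by parts, which leaves $\left(\lambda\tsg-\lambda_h\tsgh,\tvh\right)$; the piece $\lambda_h\left(\tsg-\tP_h\tsg,\tvh\right)$ is then paired with $\tvh\in\NhO$, not with an element of $\tMh$, so the orthogonality you invoke in your `main obstacle' paragraph is unavailable and this piece is only $O(h^t)$. Your inventory of ingredients (superconvergence of $\tP_h\tsg-\tsgh$, the eigenvalue estimate, the splitting through $\tP_h\tsg$, the identity $\curl\tuhh=\lambda\tP_h\tsg$) is correct, but the argument needs the integration by parts onto $\tbeta$ first and the insertion of $\tP_h\tbeta$, not of $\tP_h(\curl\tbeta)$.
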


\begin{proof}
Due to the fact that $\teh\in\HOcurl$, by using Green's theorem, 
\eqref{proper_u_sg_2} and \eqref{curl_uh-lambh_sgh} and adding and
subtracting $\lambda_h\left(\tsg-\tP_h\tsg,\tbeta\right)$, we obtain
\begin{equation*}
\left(\teh,\curl\tbeta\right)
=\left(\curl\teh,\tbeta\right)
=\left(\left(\lambda-\lambda_h\right)\tsg,\tbeta\right)
+\lambda_h\left(\tsg-\tP_h\tsg,\tbeta\right)
+\lambda_h\left(\tP_h\tsg-\tsgh,\tbeta\right).
\end{equation*}
Since $\tbeta\in\Hcurl\cap\HOdivO$, by virtue of
Theorem~\ref{embed_HOcurl_Hdiv in_tHt}, we have that
$\tbeta\in\tM\cap\tHt$ and $\left\|\tbeta\right\|_t \leq
C\left\|\curl\tbeta\right\|_0\leq C\left\|\teh\right\|_0$. Then, since
$\tsg\in\tM\cap\tHt$ (cf. Corollary~\ref{proper_u_sg}) as well, we apply
Lemma~\ref{estim_b-P_h_b} and Corollary~\ref{proper_u_sg} again to write
\begin{equation*}
\left(\tsg-\tP_h\tsg,\tbeta\right)
=\left(\tsg-\tP_h\tsg,\tbeta-\tP_h\tbeta\right)
\leq Ch^{2t}
\left\|\tsg\right\|_{t}\left\|\tbeta\right\|_{t}
\leq Ch^{2t}
\left\|\tsg\right\|_0\left\|\teh\right\|_0.
\end{equation*}
Using this estimate together with \eqref{a priori estim_lambda_h},
Corollary~\ref{cor_superconv_Phsg_tsgh} and the facts that
$\left\|\tsg\right\|_0=1$ and 
$\left\|\tbeta\right\|_0\leq C\left\|\teh\right\|_0$, we conclude that
\begin{equation*}
\left(\teh,\curl\tbeta\right)
\leq Ch^{2t}\left\|\teh\right\|_0.
\end{equation*}
\end{proof}

As an immediate consequence of Lemmas~\ref{AEE_nabla_alpha} and
\ref{AEE_curl_beta}, we obtain a \textit{reliability estimate} up to an
$\mathcal{O}(h^{2t})$-term.

\begin{theorem}
\label{AEE_L2}
Let $\teh:=\tu-\tuh$. Then
\begin{equation*}
\left\|\teh\right\|_0
\leq C\left(\eta+h^{2t}\right).
\end{equation*}
\end{theorem}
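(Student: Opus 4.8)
The plan is to combine the two preceding lemmas through the Helmholtz decomposition, together with a standard stability bound for the gradient part of the error. First I would record that $\nabla\alpha$ is precisely the $\tLtwo$-orthogonal projection of $\teh$ onto $\nabla(\HOl)$: testing the defining identity $\left(\nabla\alpha,\nabla\psi\right)=\left(\teh,\nabla\psi\right)$ with $\psi=\alpha$ and applying the Cauchy--Schwarz inequality gives $\left\|\nabla\alpha\right\|_0^2=\left(\teh,\nabla\alpha\right)\le\left\|\teh\right\|_0\left\|\nabla\alpha\right\|_0$, hence $\left\|\nabla\alpha\right\|_0\le\left\|\teh\right\|_0$. (If $\teh=\tO$ the statement is trivial, so we may assume $\left\|\teh\right\|_0>0$.)

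Next I would insert the decomposition into the splitting $\left\|\teh\right\|_0^2=\left(\teh,\nabla\alpha\right)+\left(\teh,\curl\tbeta\right)$ already established in the text, and estimate each summand separately. For the first term, Lemma~\ref{AEE_nabla_alpha} together with the bound $\left\|\nabla\alpha\right\|_0\le\left\|\teh\right\|_0$ gives $\left(\teh,\nabla\alpha\right)\le C\eta\left\|\teh\right\|_0$. For the second term, Lemma~\ref{AEE_curl_beta} gives directly $\left(\teh,\curl\tbeta\right)\le Ch^{2t}\left\|\teh\right\|_0$. Adding the two yields $\left\|\teh\right\|_0^2\le C\left(\eta+h^{2t}\right)\left\|\teh\right\|_0$, and dividing by $\left\|\teh\right\|_0$ concludes the proof.

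Since both ingredients are already available, I do not expect any genuine obstacle; the argument is a one-line combination. The only points worth a word of care are that the generic constant $C$ silently absorbs the stability constant of the Helmholtz decomposition from \cite[Corollary~3.16]{Amrouche_Berna_Dauge_Girault_Vect_poten_3D_nonsmooth_dom_98} and the Cl\'ement/Friedrichs constants entering Lemma~\ref{AEE_nabla_alpha}, all of which are independent of $h$, and that the $\mathcal{O}(h^{2t})$ term is a genuinely higher-order contribution relative to the optimal $\mathcal{O}(h^t)$ convergence rate of $\left\|\teh\right\|_0$, so that $\eta$ is asymptotically the dominant part of the bound.
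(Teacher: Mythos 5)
Your argument is correct and coincides with the paper's (unwritten, since the theorem is stated as an immediate consequence of Lemmas~\ref{AEE_nabla_alpha} and \ref{AEE_curl_beta}) proof: split $\left\|\teh\right\|_0^2$ via the Helmholtz decomposition, bound the gradient part with Lemma~\ref{AEE_nabla_alpha} and the stability estimate $\left\|\nabla\alpha\right\|_0\leq\left\|\teh\right\|_0$, bound the solenoidal part with Lemma~\ref{AEE_curl_beta}, and divide by $\left\|\teh\right\|_0$. Nothing is missing.
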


\begin{remark}
The term $\mathcal{O}(h^{2t})$ in the theorem above can be seen as a
`higher-order term'. This is strictly the case when lowest-order
N\'ed\'elec elements ($k=0$) are used for the discretization. In fact,
in such a case, $\left\|\teh\right\|_0$ could be at most
$\mathcal{O}(h)$, provided the eigenfunction $\tu$ were smooth enough
($\tu\in\tH^1(\curl,\Om)$). Otherwise, $\left\|\teh\right\|_0$ is
$\mathcal{O}(h^t)$ with $1/2<t<1$. In both cases, the term
$\mathcal{O}(h^{2t})$ is asymptotically negligible with respect to
$\teh$. For higher-order N\'ed\'elec elements, the term
$\mathcal{O}(h^{2t})$ is asymptotically negligible only when the
eigenfunction is singular ($\tu\notin\tH^{2t}(\curl,\Om)$), as often
happens in non-convex polyhedral domains.
\end{remark}

\subsection{Local efficiency of the estimators}

In this section we show that the indicators $\eta_K$ provide a lower
bound of the error $\teh$ in a vicinity of $K$.

\begin{theorem}
There exists $C>0$ such that, for any $K\in\Th$,
\begin{equation}
\label{effic_div_estim}
h_K\left\|\dv\tuh\right\|_{0,K}
\leq C\left\|\teh\right\|_{0,K}
\end{equation}
and, for any inner face $F\in\FhI$,
\begin{equation}
\label{effic_jump_estim}
h_F^\frac{1}{2}\left\|{\jump{\tuh\cdot\tn_F}}_F\right\|_{0,F}
\leq C\left\|\teh\right\|_{0,\tilde{\omega}_F},
\end{equation}
where $\tilde{\omega}_F$ denotes the union of the two tetrahedra sharing
the face $F$. Consequently, 
\begin{equation*}
\eta_K\leq C\left\|\teh\right\|_{\tilde{\omega}_K},
\end{equation*}
where $\tilde{\omega}_K$ is the union of the tetrahedra sharing a face
with $K$.
\end{theorem}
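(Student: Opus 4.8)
The plan is to use the standard bubble-function technique of Verf\"urth, treating the two indicator contributions separately and then assembling them.

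\medskip

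\textbf{Step 1: the divergence term \eqref{effic_div_estim}.} On a fixed element $K$, let $b_K$ denote the interior (volume) bubble function supported in $K$, and set $w_K:=b_K\,(\dv\tuh)|_K$ extended by zero to $\Om$, so $w_K\in\HOl$. Since $\dv\tu=0$ in $\Om$ (recall $\tu=\curl\tsg$), integration by parts over $K$ gives $(\dv\tuh,w_K)_K=(\dv(\tuh-\tu),w_K)_K=-(\tuh-\tu,\nabla w_K)_K=(\teh,\nabla w_K)_K$, the boundary term vanishing because $w_K=0$ on $\pt K$. Because $\dv\tuh|_K$ is a polynomial of bounded degree, the usual norm-equivalence estimates for bubble functions yield $\|\dv\tuh\|_{0,K}^2\le C(\dv\tuh,w_K)_K$ and $\|\nabla w_K\|_{0,K}\le C h_K^{-1}\|\dv\tuh\|_{0,K}$. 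Combining, $\|\dv\tuh\|_{0,K}^2\le C\|\teh\|_{0,K}\,h_K^{-1}\|\dv\tuh\|_{0,K}$, which gives \eqref{effic_div_estim} after dividing.

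\medskip

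\textbf{Step 2: the jump term \eqref{effic_jump_estim}.} Fix an inner face $F$ with patch $\tilde\omega_F$ consisting of the two tetrahedra $K_1,K_2$ sharing $F$. Let $b_F$ be the face bubble supported in $\tilde\omega_F$ and vanishing on $\pt\tilde\omega_F$, and let $j:=\jump{\tuh\cdot\tn_F}_F$ extended to $\tilde\omega_F$ via a polynomial extension (e.g.\ constant along the normal direction); set $w_F:=b_F j\in\HOl$ with support in $\tilde\omega_F$. Integrating by parts on $K_1$ and $K_2$ and summing, the volume contributions produce $(\dv\tuh,w_F)_{\tilde\omega_F}$ and the boundary contributions collapse to $(\jump{\tuh\cdot\tn_F}_F,w_F)_F$ because $w_F$ vanishes on the remaining faces of the patch; using $\dv\tu=0$ and $\tu\cdot\tn$ continuous across $F$ (as $\tu\in\Hdiv$ with $\dv\tu=0$), this reads $(j,w_F)_F=(\teh,\nabla w_F)_{\tilde\omega_F}-(\dv(\tuh-\tu),w_F)_{\tilde\omega_F}=(\teh,\nabla w_F)_{\tilde\omega_F}+(\dv\tuh,w_F)_{\tilde\omega_F}$. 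Bubble-function scaling gives $\|j\|_{0,F}^2\le C(j,w_F)_F$, $\|w_F\|_{0,\tilde\omega_F}\le C h_F^{1/2}\|j\|_{0,F}$ and $\|\nabla w_F\|_{0,\tilde\omega_F}\le C h_F^{-1/2}\|j\|_{0,F}$. Hence $\|j\|_{0,F}^2\le C\|j\|_{0,F}\big(h_F^{-1/2}\|\teh\|_{0,\tilde\omega_F}+h_F^{1/2}\|\dv\tuh\|_{0,\tilde\omega_F}\big)$, and inserting \eqref{effic_div_estim} on each of $K_1,K_2$ to absorb the last term yields \eqref{effic_jump_estim}.

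\medskip

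\textbf{Step 3: assembling $\eta_K$.} By definition $\eta_K^2=h_K^2\|\dv\tuh\|_{0,K}^2+\sum_{F\subset\pt K}\tfrac{h_F}{4}\|\jump{\tuh\cdot\tn_F}_F\|_{0,F}^2$. Apply \eqref{effic_div_estim} to the first term and \eqref{effic_jump_estim} to each face term; since there are at most four faces per tetrahedron and each patch $\tilde\omega_F$ is contained in $\tilde\omega_K$, and by shape-regularity any fixed element of $\Th$ belongs to a bounded number of such patches, summation gives $\eta_K^2\le C\|\teh\|_{0,\tilde\omega_K}^2$, i.e.\ $\eta_K\le C\|\teh\|_{\tilde\omega_K}$.

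\medskip

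\textbf{Expected main obstacle.} The routine bubble-function inverse and scaling estimates are standard; the only point requiring care is the treatment of the jump term, namely choosing the polynomial extension of $\jump{\tuh\cdot\tn_F}_F$ into $\tilde\omega_F$ and correctly accounting for the volume term $(\dv\tuh,w_F)_{\tilde\omega_F}$ that appears after integration by parts. This is handled by folding it back using the already-proven element estimate \eqref{effic_div_estim}; everything else is bookkeeping with shape-regularity constants.
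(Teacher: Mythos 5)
Your proposal is correct and follows essentially the same approach as the paper: an element bubble function for the divergence bound, a face bubble applied to a polynomial extension of the normal jump (constant in the transversal direction) for the face bound, the standard norm-equivalence/inverse/scaling estimates, and absorption of the extra volume term via the already-proven element estimate. The only blemish is a harmless sign slip in your integration-by-parts identity for the jump term (since $\dv\tu=0$ the correct identity is $\left(j,w_F\right)_F=\left(\dv\tuh,w_F\right)_{\tilde{\omega}_F}-\left(\teh,\nabla w_F\right)_{\tilde{\omega}_F}$), which is immaterial because both terms are subsequently bounded by Cauchy--Schwarz.
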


\begin{proof}
Let $b_K\in\HOl$ be the standard quartic bubble function on $K$ which
attains the value one at the barycenter of $K$ extended by zero to the
whole $\Om$. Let us set $\varphi_{K}:=\left(\dv\tuh\right)b_K\in\HOl$.
By equivalence of norms on finite-dimensional spaces and using that
$\dv\tu=0$ in $\Om$ and Green's theorem, we have that
\begin{equation*}
C\left\|\dv\tuh\right\|^2_{0,K}
\leq\left(\dv\tuh,\varphi_K\right)_K
=\left(\dv\teh,\varphi_K\right)_K
=-\left(\teh,\nabla\varphi_K\right)_K.
\end{equation*}
Now, by using Cauchy--Schwarz inequality, an inverse inequality and
scaling arguments, we obtain
\begin{equation*}
\left(\teh,\nabla\varphi_K\right)_K 
\leq\left\|\teh\right\|_{0,K}
\left(\left\|\nabla(\dv\tuh)\right\|_{0,K}
\left\|b_K\right\|_{\infty,K}
+\left\|\dv\tuh\right\|_{0,K}
\left\|\nabla b_K\right\|_{\infty,K}\right)
\leq Ch_K^{-1}\left\|\dv\tuh\right\|_{0,K}
\left\|\teh\right\|_{0,K}.
\end{equation*}
Thus, \eqref{effic_div_estim} follows by combining these two
inequalities.

In order to prove \eqref{effic_jump_estim}, we observe that by applying
Green's theorem and the fact that $\dv\tu=0$ in $\Om$, we have for all
$\gamma\in\HOl$
\begin{align}
\left(\teh,\nabla\gamma\right)_{\Om} 
& =-\left(\tuh,\nabla\gamma\right)_{\Om}
=\sum_{K\in\Th}
\left\{\left(\dv\tuh,\gamma\right)_K
-\left(\tuh\cdot\tn_K,\gamma\right)_{\pt K}\right\}
\notag
\\
& =\sum_{K\in\Th}
\left\{\left(\dv\tuh,\gamma\right)_K
-\frac{1}{2}\sum_{F\in\FhI:\ F\subset\pt K}
\left({\jump{\tuh\cdot\tn_F}}_F,\gamma\right)_F\right\}.
\label{effic_jump_1}
\end{align}

Let us fix $F\in\FhI$ and set
$J_F:={\jump{\tuh\cdot\tn_F}}_F\in\Pol_{k+1}(F)$. Let $\Jt_F$ be the
extension of $J_F$ to $\tilde{\omega}_F$ such that, for each of the two
tetrahedra $K$ sharing $F$, ${\Jt_F|}_K\in\Pol_{k+1}(K)$ is
constant in the direction from the barycenter of $F$ to the opposite
vertex of $K$. Further, let  $b_F\in\HOlTF$ be the piecewise cubic
bubble function which attains the value one at the barycenter of $F$.
Taking $\gamma:=\Jt_Fb_F\in\HOlTF$ in \eqref{effic_jump_1}, we have
\begin{equation*}
\left(\teh,\nabla\gamma\right)_{\tilde{\omega}_F}
=\left(\dv\tuh,\gamma\right)_{\tilde{\omega}_F}
-\left(J_F,\gamma\right)_F.
\end{equation*}
Therefore, using an inverse inequality and Cauchy--Schwarz inequality,
we obtain
\begin{equation}
\label{effic_jump_2}
C\left\|J_F\right\|^2_{0, F}
\le \left(J_F,J_Fb_F\right)_{F}
=\left(J_F,\gamma\right)_{F}
\le \left\|\dv\tuh\right\|_{0,\tilde{\omega}_F}
\left\|\gamma\right\|_{0,\tilde{\omega}_F}
+\left\|\teh\right\|_{0,\tilde{\omega}_F}
\left\|\nabla\gamma\right\|_{0,\tilde{\omega}_F}
\end{equation}
Now, straightforward computations allow us to check that, for each of
the two tetrahedra $K$ sharing $F$, 
\begin{equation}
\label{AAAA}
\left\|\Jt_F\right\|_{0,K}^2
\le Ch_F\left\|J_F\right\|_{0,F}^2.
\end{equation}
Hence, 
\begin{equation*}
\left\|\gamma\right\|_{0,\tilde{\omega}_F}
=\left\|\Jt_Fb_F\right\|_{0,\tilde{\omega}_F}
\le\left\|\Jt_F\right\|_{0,\tilde{\omega}_F}
\le Ch_F^{\frac12}\left\|J_F\right\|_{0,F}.
\end{equation*}
On the other hand, a scaling argument, an inverse inequality and
\eqref{AAAA} yield
\begin{align*}
\left\|\nabla\gamma\right\|_{0,\tilde{\omega}_F}
=\left\|\nabla(\Jt_Fb_F)\right\|_{0,\tilde{\omega}_F}
& \le\left\|\left(\nabla\Jt_F\right)b_F\right\|_{0,\tilde{\omega}_F}
+\left\|\Jt_F\nabla b_F\right\|_{0,\tilde{\omega}_F}
\\
& \le \left\|\left(\nabla\Jt_F\right)\right\|_{0,\tilde{\omega}_F}
+\left\|\nabla b_F\right\|_{\infty,\tilde{\omega}_F}
\left\|\Jt_F\right\|_{0,\tilde{\omega}_F}
\le Ch_F^{-1}\left\|\Jt_F\right\|_{0,\tilde{\omega}_F}
\le Ch_F^{-\frac12}\left\|J_F\right\|_{0,F}.
\end{align*}

By substituting the last two inequalities into \eqref{effic_jump_2}, we
obtain
\begin{equation*}
\left\|J_F\right\|_{0,F}^2
\le C\left(h_F^{\frac12}\left\|\dv\tuh\right\|_{0,\tilde{\omega}_F}
+h_F^{-\frac12}\left\|\teh\right\|_{0,\tilde{\omega}_F}\right)
\left\|J_F\right\|_{0,F}.
\end{equation*}
Finally, \eqref{effic_jump_estim} follows from this inequality and
\eqref{effic_div_estim}.
\end{proof}

\section{Numerical test} 
\label{sec_numer_exam}
\setcounter{equation}{0}

In this section, we illustrate the behavior of the proposed error
indicators on a particular test problem.

We have discretized Problem~\ref{Prim_Formul} by using lowest-order edge
elements on tetrahedral meshes and solved the resulting algebraic
eigenvalue problem using the Matlab routine \texttt{eigs}, that is based
on the \texttt{ARPACK} package (\cite{LehSorYan:1998}). Meshes have been
created with the tetrahedral mesh generator TetGen
(\cite{Si_Tetgen_15}). 

Notice that since the lowest-order edge elements have zero divergence on
each element $K$, only the jumps in the normal components of the
computed eigenfunction contribute to the error indicators:
\begin{equation*}
\eta_K^2
:=\sum_{F\in\FhI:\ F\subset\pt K} 
\frac{h_F}{4}\left\|{\jump{\tuh\cdot\tn_F}}_F\right\|^2_{0,F}.
\end{equation*}

We have chosen a domain with a reentrant corner in order to have
singular eigenfunctions which may take advantage of solving the discrete
problem with adaptively refined meshes. In particular, we have taken a
so called \textit{Fichera domain}:
$\Om:=\left(0,0.8\right)\times\left(0,1\right)\times
\left(0,1.2\right)\backslash\left(0,0.4\right)\times
\left(0,0.5\right)\times\left(0,0.6\right)$ (see
Figure~\ref{fig:fich-mesh}). 

\begin{figure}[ht]
\begin{center}
\includegraphics[scale=.70]{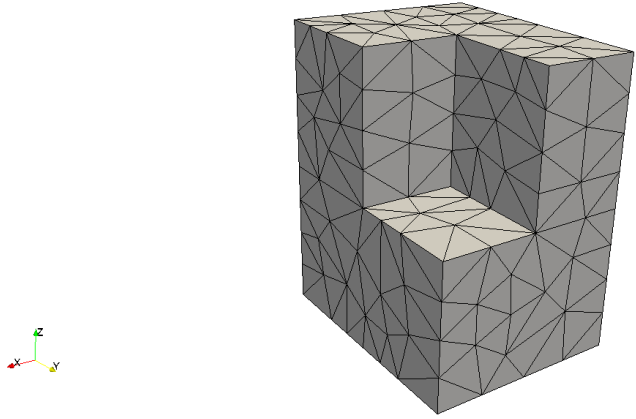}
\caption{Domain with the initial mesh.}
\label{fig:fich-mesh}
\end{center}
\end{figure}

The goal of this test was to compute the eigenpair corresponding to the
smallest positive eigenvalue. The exact eigenpairs of this problem are
not known. Because of this, first, we have computed them with highly
refined structured `uniform' meshes, which allowed us to obtain by
extrapolation a very accurate approximation of the corresponding
eigenvalue. These `uniform' meshes have been obtained by subdividing the
domain in equal hexahedra, each of them subdivided into six tetrahedra.
By so doing, we have obtained $\lambda=12.92$ as an approximate value of
the smallest positive eigenvalue with four correct significant digits.
This $\lambda$ was taken as the `exact' eigenvalue.

Then, we have applied an adaptive scheme driven by the error indicators 
$\eta_K$. We have started the computations with the unstructured mesh
consisting of $578$ elements shown in Figure~\ref{fig:fich-mesh} and
have proceeded with the adaptive refinement process. 

Figure~\ref{fig:fich-orden} displays a log-log plot of the errors 
between the computed approximations of the smallest positive eigenvalue
and the `exact' one, versus the number of elements $N$ of the meshes.
The figure shows the results obtained with `uniform' meshes and with
adaptively refined meshes.

\begin{figure}[ht]
\begin{center}
\includegraphics[scale=0.75]{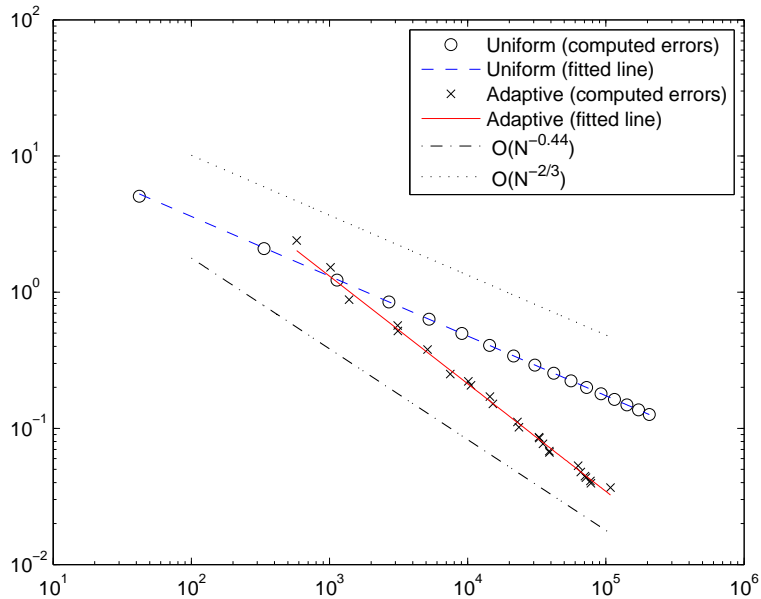} \caption{Error
curves for the smallest positive eigenvalue of the Maxwell's equations
on the Fichera domain computed with `uniform' and adaptively refined
meshes: log-log plots of the respective errors versus the number of
elements.}
\label{fig:fich-orden}
\end{center}
\end{figure}

The very accurate agreement between the eigenvalues computed with
`uniform' meshes and the line obtained by a least square fitting of them
is a clear indication of the reliability of the value taken as `exact'.
The slope of the line is $-0.44$, which indicates that the errors of the
eigenvalue computed with these `uniform' meshes satisfy
$\left|\lambda-\lambda_h\right|\approx CN^{-0.44}=Ch^{2t}$ with
$t=0.66$. 

It can be clearly seen from this figure that the eigenvalues computed
with the adaptively refined meshes converge to the `exact' one with a
higher order of convergence than those computed with the `uniform'
meshes. Moreover, for similar number of elements $N$, the former are
significantly smaller than the latter, which shows a neat advantage of
using such and adaptive procedure. The figure also includes a dashed
line with slope $-2/3$, which corresponds to the optimal order of
convergence for the used lowest-order edge elements. The slope of the
line obtained by a least squares fitting of the values computed with the
adaptive scheme is a bit steeper: $-0.79$.

\section*{Acknowledgements and Funding}
First author gratefully acknowledges the hospitality of University of
Concepci\'on (Departamento de Ingenier\'ia Matem\'atica and CI$^2$MA)
during his visit on January 2016.
First and second authors were partially supported by PRIN/MIUR, by
GNCS/INDAM and by IMATI/CNR.
Third author was partially supported by BASAL project CMM, Universidad
de Chile (Chile) and by Anillo ANANUM, ACT1118, CONICYT (Chile).
Fourth author was supported by a Fondecyt Postdoctoral Grant no.
3150047 and by Anillo ANANUM, ACT1118, CONICYT (Chile).

\bibliographystyle{IMANUM-BIB}
\bibliography{bibl}

\end{document}